\documentclass[reqno]{article} 

\usepackage{amsmath,amssymb,amsfonts,amsthm}
\usepackage{enumitem}
\usepackage{fullpage}
\usepackage[usenames,dvipsnames]{xcolor}
\usepackage[colorlinks=true, pdfstartview=FitV,linkcolor=ForestGreen,citecolor=ForestGreen, urlcolor=blue]{hyperref}


\newtheorem{theo}{\bf Theorem}[section]
\newtheorem{lem}[theo]{\bf Lemma}
\newtheorem{pro}[theo]{\bf Proposition}
\newtheorem{cor}[theo]{\bf Corollary}
\newtheorem{defi}[theo]{\bf Definition}
\theoremstyle{remark}
\newtheorem{rem}[theo]{Remark}


\newcommand{\R}{{\mathbb R}}
\def\a{\alpha}
\def\e{\epsilon}
\def\d{\delta}

\numberwithin{equation}{section}


\begin{document}

\title{\bf The twin blow-up method for Hamilton-Jacobi equations in higher dimension
}
\author{Nicolas Forcadel, Cyril Imbert and R\'egis Monneau}

\maketitle

\begin{abstract}
In this paper, we show how to extend the twin blow-up method recently developped by the authors (\textit{Comptes Rendus. Math.}, 2024), in order to obtain a new comparison principle for an evolution coercive Hamilton-Jacobi equation posed in a domain of an Euclidian space of any dimension and supplemented with a boundary condition. The  method allows dealing with the case where tangential variables and the  variable corresponding to the normal gradient of the solution  are strongly coupled at the boundary. We elaborate on a method introduced by P.-L. Lions and P. Souganidis (\textit{Atti Accad. Naz. Lincei}, 2017). Their argument relies on a {single} blow-up procedure after rescaling the semi-solutions to be compared while two simultaneous blow-ups are performed in this work,  one for each  variable of the classical doubling variable technique.  A  one-sided Lipschitz estimate satisfied by a combination of the two blow-up limits plays a key role. 
\end{abstract}

\tableofcontents

\section{Introduction}

This work is concerned with strong uniqueness (comparison principle) of viscosity solutions to a Hamilton-Jacobi equation of evolution type of the form,
\begin{equation}\label{eq::b1}
u_t+ H(X,Du)=0 \quad \mbox{on}\quad (0,T)\times \Omega
\end{equation}
where $X:=(t,x)$, supplemented with the (desired) boundary condition
$$u_t+ F(X,Du)=0 \quad \mbox{on}\quad (0,T)\times \partial \Omega$$
and the initial condition
\begin{equation}\label{eq::b2}
u(0,\cdot)=u_0\quad \mbox{on}\quad \left\{0\right\}\times \overline \Omega.
\end{equation}
The spatial domain $\Omega$ is a subset of the Euclidian space of dimension $d \ge 1$. We will first see how to deal with a half-space  and we will then consider the
case of a $C^1$ bounded domain. 

Since the desired boundary condition can be lost when characteristics reach $\partial \Omega$, it has to be imposed in a weak sense.
In the viscosity solution framework, the weak sense means that either the desired boundary condition is satisfied or the PDE is satisfied on the boundary.
More precisely, subsolutions and supersolutions of \eqref{eq::b1} are assumed to satisfy at the boundary the following inequalities,
\begin{equation}\label{eq::b3}
\left\{\begin{array}{lll}
u_t+ \min(F,H)(X,Du)\le 0 &\quad \mbox{on}\quad (0,T)\times \partial \Omega &\quad \mbox{(subsolutions)},\\
u_t+ \max(F,H)(X,Du)\ge 0 &\quad \mbox{on}\quad (0,T)\times \partial \Omega &\quad \mbox{(supersolutions)}.\\
\end{array}\right.
\end{equation}

We present in the introduction the comparison principle for \eqref{eq::b1}, \eqref{eq::b2}, \eqref{eq::b3} with $\Omega = \R^{d-1} \times (0,+\infty)$. 
In order to present the structure conditions imposed to the Hamiltonian $H$ and the nonlinearity $F$ associated with the boundary condition, 
we set $x=(x',x_d) \in \R^{d-1}\times [0,+\infty)$ and $p=(p',p_d)\in \R^{d-1}\times \R$ for the variable for the gradient. In particular,  $p'$ corresponds
to the  space tangential gradient of the solutions and $p_d$ to the normal gradient. In the following assumption,  $\omega,\omega_L$ denote moduli of continuity.
\begin{equation}\label{eq::b4}
\left\{\begin{array}{l}
\mbox{\bf i) (Continuity and bound)}\\
         \mbox{$H:[0,T]\times \overline \Omega \times \R^d \to \R$ is continuous}\\
         \mbox{the map $X\mapsto H(X,0)$ is bounded}.\\
\\
\mbox{\bf ii) (Uniform continuity in the gradient)}\\ 
\mbox{For any $L>0$, we have for all $X\in [0,T]\times \overline \Omega$ and $p,q\in [-L,L]^d$}\\[2mm]
|H(X,p)-H(X,q)| \le \omega_L(|p-q|).\\
\\
\mbox{\bf iii) (Continuity in the tangential variables)}\\
\mbox{For $X=(t,x',x_d)$ and $Y=(s,y',x_d)$ with $t,s \in [0,T]$ and $x',y' \in \R^{d-1}$ and $x_d \ge 0$}\\[2mm]
H(Y,p)-H(X,p) \le \omega(|Y-X|\left(1+|p'|+\max\left\{0,H(X,p)\right\}\right)).\\
\\
\mbox{\bf iv) (Uniform normal coercivity)}\\
\mbox{For any $L>0$, we have}\\
\displaystyle \lim_{|p_d|\to +\infty} \inf \{ H(X,p',p_d) : {X\in [0,T]\times \overline \Omega,\ p'\in  [-L,L]^{d-1}}\}= +\infty.\\
\end{array}\right.
\end{equation}
and making artificially appear the dependence on $x\in \overline \Omega$ for $F$ (in order to unify the presentation of $H$ and $F$), we consider
\begin{equation}\label{eq::b5}
\left\{\begin{array}{l}
\mbox{\bf i) (Continuity, bound and monotonicity)}\\
         \mbox{$F:[0,T]\times \partial \Omega \times \R^d \to \R$ is continuous,} \\
         \mbox{the map $X\mapsto F(X,0)$ is bounded,}\\
\mbox{the map $p_d \mapsto F(X,p',p_d)$ is nonincreasing.}\\
\\
\mbox{\bf ii) (Uniform continuity in the gradient)}\\ 
\mbox{For any $L>0$, we have  for all $X\in [0,T]\times \partial \Omega$ and $p,q\in [-L,L]^d$}\\
|F(X,p)-F(X,q)| \le \omega_L(|p-q|). \\
\\
\mbox{\bf iii) (Continuity in the tangential variables)}\\
\mbox{for all $X,Y\in [0,T]\times \partial \Omega$ and $p\in \R^d$,}\\
F(Y,p)-F(X,p) \le \omega(|Y-X|\left(1+|p'|+\max\left\{0,\max(F,H)(X,p)\right\}\right)).\\
\\
\mbox{\bf iv) (Uniform normal semi-coercivity)}\\
\mbox{For any $L>0$, we have}\\
\displaystyle \lim_{p_d\to -\infty} \inf \{ F(X,p',p_d) : {X\in [0,T]\times \partial \Omega,\ p'\in  [-L,L]^{d-1}}\} = +\infty.\\
\end{array}\right.
\end{equation}
Under the previous structural conditions, sub and super-solutions of the Hamilton-Jacobi equation under study can be compared. 
\begin{theo}[A comparison principle with strong tangential coupling] \label{th::b7}
Let $T>0$ and assume that $H,F$ satisfy \eqref{eq::b4}-\eqref{eq::b5}. Assume that the initial data $u_0$ is uniformly continuous.
Let $u,v: [0,T)\times \overline \Omega \to \R$ be two functions with $u$ upper semi-continuous and $v$ lower semi-continuous. 
Assume that $u$ (resp. $v$) is a viscosity subsolution (resp. supersolution) of \eqref{eq::b1}-\eqref{eq::b2}. Assume moreover that there exists a constant $C_T>0$ such that
\begin{equation}\label{eq::a10}
u\le u_0+C_T\quad \mbox{and}\quad v\ge u_0-C_T\quad \mbox{on}\quad [0,T)\times \overline \Omega.
\end{equation}
If we have
$$u(0,\cdot)\le u_0 \le v(0,\cdot) \quad \mbox{on}\quad \left\{0\right\}\times \overline \Omega$$
then we have
$$u\le v \quad \mbox{on}\quad [0,T)\times \overline \Omega.$$
\end{theo}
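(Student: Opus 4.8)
The plan is to follow the classical doubling-of-variables scheme, but to replace the single blow-up of Lions--Souganidis by two simultaneous blow-ups, one for each of the doubled variables. First I would make the standard reductions: by the bound \eqref{eq::a10} together with the linear-in-time perturbation trick (replacing $u$ by $u - \eta/(T-t)$), it suffices to rule out a strictly positive maximum of $u-v$ on a compact time-truncated region, and the coercivity assumptions \eqref{eq::b4}iv)--\eqref{eq::b5}iv) combined with the uniform continuity of $u_0$ and the comparison with $u_0 \pm C_T$ let me localize in space as well. So suppose for contradiction that $M := \sup_{[0,T')\times\overline\Omega}(u-v) > 0$ is attained (after suitable penalization) at some point with positive time. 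Then I would introduce the doubled function
\[
\Phi(X,Y) = u(X) - v(Y) - \frac{|X-Y|^2}{2\varepsilon} - (\text{localization terms}),
\]
and let $(X_\varepsilon, Y_\varepsilon) = ((t_\varepsilon,x_\varepsilon),(s_\varepsilon,y_\varepsilon))$ be a maximum point, with the usual facts $|X_\varepsilon - Y_\varepsilon|^2/\varepsilon \to 0$ and $t_\varepsilon, s_\varepsilon$ bounded away from $0$.

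The heart of the argument, and the step I expect to be the main obstacle, is what happens when $x_\varepsilon$ and $y_\varepsilon$ both approach the boundary $\{x_d = 0\}$, where the tangential gradient variable $p'$ and the normal gradient variable $p_d$ are strongly coupled. In that regime the penalization gradient $p_\varepsilon := (x_\varepsilon - y_\varepsilon)/\varepsilon$ may blow up in its normal component, so the naive estimate fails. Here I would perform the \emph{twin} blow-up: rescale $u$ near $X_\varepsilon$ and $v$ near $Y_\varepsilon$ at the natural scale dictated by the distance of $x_\varepsilon, y_\varepsilon$ to the boundary, obtaining two blow-up limits $\bar u$ and $\bar v$ that are, respectively, a global subsolution and supersolution of a limiting (frozen-coefficient) Hamilton--Jacobi problem on a half-space with the boundary condition \eqref{eq::b3}. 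The key new ingredient is that the combination $\bar u(X) - \bar v(Y)$ — or more precisely the relevant one-variable reduction of it along the normal direction — satisfies a \emph{one-sided Lipschitz estimate}: using normal coercivity of $H$ and normal semi-coercivity of $F$, one shows the normal derivative of $\bar u$ is bounded above and that of $\bar v$ bounded below near the boundary, which is exactly the compactness needed to pass to the limit in the coupled variables.

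Once the blow-up limits are controlled, I would plug the optimal test function (coming from the doubling) into the viscosity inequalities for $u$ at $X_\varepsilon$ and for $v$ at $Y_\varepsilon$. On the interior part $\{x_d > 0\}$ this is routine: subtract the two inequalities, use \eqref{eq::b4}ii)--iii) to absorb the error terms $\omega(|X_\varepsilon - Y_\varepsilon|(1+|p'_\varepsilon| + \max(0,H)))$ — noting that $|p'_\varepsilon|$ stays bounded because the tangential penalization is benign — and derive $0 < $ (something $\to 0$), a contradiction. On the boundary part, the viscosity inequalities involve $\min(F,H)$ for $u$ and $\max(F,H)$ for $v$; here I would split into the cases according to which of $F$ or $H$ is selected, and in each case use either the interior-type argument (when $H$ is active) or the monotonicity and semi-coercivity of $F$ together with the one-sided Lipschitz bound from the blow-up (when $F$ is active) to close the loop. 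The case analysis at the boundary, combined with the need to quantify the blow-up limits uniformly in $\varepsilon$, is where the real work lies; the interior estimates and the reductions are standard. Letting $\varepsilon \to 0$ and then $\eta \to 0$ yields $M \le 0$, hence $u \le v$.
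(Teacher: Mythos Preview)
Your outline captures the spirit of the argument --- twin blow-ups plus a one-sided Lipschitz control --- but several of the mechanisms you describe are not how the proof actually runs, and at least two of them would not work as stated.

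First, the doubling is not done with a single penalization $|X-Y|^2/2\varepsilon$. The paper separates the tangential variables $\xi=(t,x')$ from the normal variable $x_d$ and uses \emph{two} scales, $|\xi-\zeta|^2/2\nu$ and $|x_d-y_d|^2/2\delta$. One first sends $\delta\to 0$ so that the optimal normal coordinates coincide, $\bar x_d=\bar y_d$, and only then works at the boundary. Without this separation you cannot reduce to a genuinely one-dimensional problem in the normal variable, which is where the whole boundary analysis lives.

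Second, the one-sided Lipschitz estimate is not produced by the blow-up, and it does not say that ``the normal derivative of $\bar u$ is bounded above and that of $\bar v$ below.'' It is obtained \emph{before} any blow-up, from the tangential sup-convolution $\tilde u^\nu$ of the truncated subsolution (Lemma~\ref{lem::62xt}): coercivity of $H$ forces $\tilde u^\nu$ to be Lipschitz in $x_d$ with a constant $L_\nu$, and the maximality in the doubled problem then yields the key inequality \eqref{eq::a23}, namely
\[
\tilde u(\xi,x_d)-V(\zeta,y_d)\le \tilde u(\bar\xi,\bar x_d)-V(\bar\zeta,\bar x_d)+\frac{|\xi-\zeta|^2}{2\nu}-\frac{|\bar\xi-\bar\zeta|^2}{2\nu}+L_\nu|x_d-y_d|.
\]
This inequality is what survives the blow-up and allows the half-relaxed limits $U^0,V^0$ to be finite and comparable; your proposed route (extract it from the blow-up limits themselves) is circular, since without it you have no compactness to take those limits in the first place.

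Third, the blow-up scale is a fresh small parameter $\varepsilon\to 0$ at the fixed boundary point $\bar x_d=0$, not ``the distance of $x_\varepsilon,y_\varepsilon$ to the boundary.'' After the blow-up one takes sup/inf over the tangential variables to land on two functions $\overline u^*,\underline v_*$ of the single variable $\hat x_d\in[0,\infty)$, satisfying \eqref{eq::n33}--\eqref{eq::n34}.

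Finally, the boundary conclusion is not a case split on ``which of $F$ or $H$ is selected.'' That is exactly the difficulty: you do not control which alternative holds at $0$. The resolution is Corollary~\ref{cor::n6} on critical slopes: from $\overline u^*\le \underline v_*$ with equality at $0$ one extracts a single $p_d$ in the interval between the critical slopes for which either the $H$-inequalities or the $F$-inequalities hold simultaneously with a gap $\bar\eta$. Only then do assumptions \eqref{eq::b4}iii) and \eqref{eq::b5}iii) close the contradiction as $\alpha\to 0$ and $\nu\to 0$. Your sketch does not contain this lemma, and without it the boundary step does not close.
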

\begin{rem}
  A simplified version of Theorem \ref{th::b7} is presented in \cite{FIM2}. It was assumed in this note that dimension $d=1$ and that initial data are Lipschitz continuous.
  Some details were skept and they are presented in this new work. 
\end{rem}
\begin{rem}
  In Section \ref{s5}, we also extend this result to the case where $\Omega$ is a $C^1$ bounded open set.
\end{rem}
\begin{rem}
Notice that, given \eqref{eq::b4}, we can always define the state constraint boundary function
\[H^-(X,p',p_d):=\inf_{q_d\le p_d} H(X,p',q_d)\quad \text{for}\quad X\in [0,T]\times \partial\Omega\quad \text{and}\quad p=(p',p_d)\in \R^{d-1}\times \R\]
and it satisfies \eqref{eq::b5}. {Up to our knowledge, the comparison principle was also an open problem for  $F=H^-$ in this generality.}
\end{rem}
\begin{rem}
{Notice that in Theorem \ref{th::b7}, semi-coercivity of $F$ in condition \eqref{eq::b5} iv) can be replaced by the weak continuity of the subsolution $u$ on the boundary $(0,T)\times \partial \Omega$, using \cite[Proposition~3.12]{FIM1}) and replacing $F$ by $F_1:=\max(F,H^-)$.}
\end{rem}

\paragraph{Main contribution.}

When comparing non-Lipschitz sub/supersolutions (for instance after constructing solutions by Perron's method), a strong  coupling between tangential coordinates $(t,x') \in [0,T]\times \R^{d-1}$ and the normal gradient $\partial_{x_d} u$ is well identified in the literature as a technical difficulty, especially when this coupling arises in the boundary condition, see for instance \cite{B2,B,BC,LS1,LS2}.

It is standard to make the (strong) assumption of uniform continuity in time $t$, uniformly in the gradient $Du$. Such an assumption is not satisfied by the following simple example,
\begin{equation} \label{eq:hj-ex}
  \begin{cases}
u_t + a(X)|Du|=0 & \quad \text{in}\quad (0,T)\times  \Omega,\\
u_t  +\max\left\{0,- b(X)\partial_{x_d} u\right\}=0 &\quad \text{in}\quad (0,T)\times \partial \Omega
\end{cases}
\end{equation}
when $a,b\ge 1$ are bounded Lipschitz continuous functions {(here with $b(t,x)=b(t,x',0)$)}.

\paragraph{Comparison with known results.}

J. Guerand \cite{G} proved a comparison principle in our geometric setting in dimension $d=1$  in the case where  $H$ and $F$ {are} independent of $(t,x)$. 
She also proved a comparison principle for non-coercive Hamiltonians. 

P.-L. Lions and P. Souganidis \cite{LS2} introduced a new method for proving  comparison principles for bounded uniformly continuous sub/supersolutions for equations posed on junctions
with several branches (or half-spaces). They use a blow-up argument that reduces the study to a 1D problem. They  show the comparison principle in the case of Kirchoff-type boundary conditions
and non-convex Hamiltonians. As far as $(t,x)$ dependence is concerned, their method allows them to handle Hamiltonians that are Lipschitz continuous in $t$, see  \cite[Assumption (4)]{LS2}.

 This result is generalized by G. Barles and E. Chasseigne \cite[Theorem 15.3.7, page 295]{BC} to the case of bounded semi-continuous sub/supersolutions under 
three different junction conditions. Even if they are presented for $N = 2$ branches, we present their results in our geometric setting: 
a junction reduced to a single branch $N = 1$ in dimension $d\ge 1$. 
The three cases are the following: (1) $F$ is independent on $p_d$, (2) the Neumann problem and (3) general nonincreasing continuous $p_d\mapsto F(X,p',p_d)$.
 In the third case, the normal derivative is not coupled with the tangential coordinates $(t,x')$ in $F$ (see also the very end of  \cite[Subsection 13.2.2 and condition (GA-G-FLT) p. 247]{BC}).

 As explained above, we improve these results, using the  twin blow-up method introduced in \cite{FIM2}. A close look at the proof reveals that new ideas appear at the beginning of Step 4, when the
 reasoning focuses on the case where the point of maximum is on the boundary of the domain. Compared to the note \cite{FIM2}, we also extend the result by considering uniformly continuous initial data (and not only Lipschitz continuous ones) and working in dimension greater than one.
 
\paragraph{Organization of the paper.}
In Section \ref{s2}, we present two key boundary results  stated for stationary problems in space dimension $d=1$.
We also extend these results to the case of junctions (that will be used in future works).
In Section \ref{s3}, we recall two classical results which are suitable for our purpose. We first construct barriers. We next present some a priori estimates for the sup-convolution of subsolutions to coercive HJ equations.
The proof of the comparison principle in the case of the half space (Theorem \ref{th::b7}) is done in Section \ref{s4}.
Finally in Section \ref{s5}, we show how to adapt our twin blow-up method to the case of a $C^1$ bounded open domain.

\paragraph{Acknowledgements.} The authors thank G. Barles and E. Chasseigne for enlighting discussions during the preparation of this work.
The last author also thanks J. Dolbeault, C. Imbert and T. Leli\`evre for providing him good working conditions.
This research was partially funded by l'Agence Nationale de la Recherche (ANR), project ANR-22-CE40-0010 COSS. 
For the purpose of open access, the authors have applied a CC-BY public copyright licence to any Author Accepted Manuscript (AAM) version arising from this submission.

\section{Boundary lemmas}
\label{s2}

In this section, we work in dimension $d=1$ and set $\Omega:= (0,+\infty)$. We present some fundamental boundary results that will allow us to prove our comparison principle.
At the end of this section, we  also extend them naturally to the case of junctions (that will be useful for future works).

Before to state our result, we need to introduce the following notion of (limiting) semi-differentials.
\begin{defi}[(Limiting) semi-differentials]\label{eq::a1}
Let $A\subset \overline \Omega$ and $x_0\in A$. For $(+/-)$, we define the (first order) super/subdifferential at $x_0$ of a function $u$ on $A$ as
\begin{equation}\label{eq::n3}
D^\pm_A u(x_0)=\left\{p\in \R,\quad \mbox{such that}\quad 0\le \pm\left\{u(x_0)+p\cdot (x-x_0)+o(x-x_0)-u(x)\right\}\quad \mbox{on}\quad A\right\}\
\end{equation}
and the limit (first order) super/subdiffential at the boundary  point $x_0 \in \partial \Omega$ of $u$  as
\begin{equation}\label{eq::n4}
\bar D^\pm_\Omega u(x_0)=\left\{\mbox{$p\in \R$, there exists a sequence $p^k\in D^\pm_\Omega u(x^k)$ with $x^k\in \Omega$ and $(x^k,p^k)\to (x_0,p)$}\right\}.
\end{equation}
\end{defi}

\begin{rem}\label{rem::a2}
Note that if $p\in \bar D^+_\Omega u(x_0)$ with $x_0\in \partial \Omega$, and if $u$ is a subsolution of $H(Du)\le 0$ in $\Omega$, then $H(p)\le 0$.
\end{rem}

We then have the following result.

\begin{lem}[Critical slopes and semi-differentials]\label{lem::n1}
Let $\Omega:=(0,+\infty)$. We consider two functions $u,v: \overline \Omega \to \R\cup \left\{-\infty,+\infty\right\}$ with $u$ upper semi-continuous and $v$ lower semicontinous satisfying $u(0)=0=v(0)$ with $u\le v$ on $\overline \Omega$.
We define the critical slopes by
\begin{equation}\label{eq::n12}
\overline p:=\limsup_{\Omega\ni x \to 0} \frac{u(x)}{x},\quad \underline p:=\liminf_{\Omega\ni x \to 0} \frac{v(x)}{x}.
\end{equation}
Then we have the following (limiting) semi-differential inclusions
\begin{equation}\label{eq::a3}
\R\cap \left[\underline p, \overline p\right]\subset \bar D^+_\Omega u(0) \cap \bar D^-_\Omega v(0)\quad \mbox{if}\quad  \overline p\ge \underline p
\end{equation}
\begin{equation}\label{eq::a4}
\R\cap \left[\overline p,\underline p\right] \subset  D^+_{\overline \Omega} u(0)\cap  D^-_{\overline \Omega} v(0) 
\quad \mbox{if}\quad \overline p \le \underline p \\
\end{equation}
\begin{equation}\label{eq::a5}
\left\{\begin{array}{ll}
\overline p \in \bar D^+_\Omega u(0) &\quad \mbox{if}\quad \overline p \not= -\infty \\
\underline p \in \bar D^-_\Omega u(0) &\quad \mbox{if}\quad \underline p \not= +\infty.
\end{array}\right.
\end{equation}
\end{lem}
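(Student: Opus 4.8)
\medskip\noindent\textbf{Proof sketch.} The plan is to reduce the three inclusions to two elementary one-dimensional facts about an upper semicontinuous function $w:[0,\delta_0]\to\R\cup\{\pm\infty\}$ with $w(0)=0$, and to deduce the statements concerning $v$ from the ones concerning $u$ by working with $-v$ (which is upper semicontinuous, satisfies $-v\le -u$, has $\limsup_{x\to0^+}(-v)(x)/x=-\underline p$, and obeys $D^+_\Omega(-v)=-D^-_\Omega v$ and $\bar D^+_\Omega(-v)(0)=-\bar D^-_\Omega v(0)$). The two facts are: \textbf{(i)} [oscillation lemma] if $\sup_{(0,\delta]}w>0$ and $\inf_{(0,\delta]}w<0$ for every $\delta\in(0,\delta_0]$, then $0\in\bar D^+_\Omega w(0)$; and \textbf{(ii)} [critical slope lemma] if $\pi:=\limsup_{x\to0^+}w(x)/x\ne-\infty$, then $\pi\in\bar D^+_\Omega w(0)$ (with the convention that $\bar D^+_\Omega w(0)$ may contain $+\infty$); this is \eqref{eq::a5} for a single function. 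Granting \textbf{(i)} and \textbf{(ii)}: the estimate \eqref{eq::a5} follows by applying \textbf{(ii)} to $u$ and to $-v$; and \eqref{eq::a4} is the direct computation $D^+_{\overline\Omega}u(0)=\{p\in\R:\,p\ge\overline p\}$, $D^-_{\overline\Omega}v(0)=\{p\in\R:\,p\le\underline p\}$, whose intersection is exactly $\R\cap[\overline p,\underline p]$ (the hypothesis $u\le v$ is not used for \eqref{eq::a4}).

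For \eqref{eq::a3} assume $\overline p\ge\underline p$ and fix $p\in\R\cap[\underline p,\overline p]$; I claim $p\in\bar D^+_\Omega u(0)$, the inclusion $p\in\bar D^-_\Omega v(0)$ following by applying this to $-v$, which is again in the case ``$\overline p\ge\underline p$'' with the roles of $u$ and $v$ swapped. If $p=\overline p$, this is \textbf{(ii)} for $u$. If $p<\overline p$, then for every $\mu\in(0,\overline p-p)$ the function $w:=u-(p+\mu)\,x$ is upper semicontinuous with $w(0)=0$, has $\limsup_{x\to0^+}w(x)/x=\overline p-(p+\mu)>0$, and --- since $u\le v$ and $p+\mu>\underline p$ --- has $\liminf_{x\to0^+}w(x)/x\le\liminf_{x\to0^+}v(x)/x-(p+\mu)=\underline p-(p+\mu)<0$; hence \textbf{(i)} gives $p+\mu\in\bar D^+_\Omega u(0)$, and letting $\mu\downarrow 0$ gives $p\in\bar D^+_\Omega u(0)$ because $\bar D^+_\Omega u(0)$ is closed (a routine diagonal extraction).

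It remains to prove \textbf{(i)} and \textbf{(ii)}. For \textbf{(i)}: for $\delta\in(0,\delta_0]$ the supremum of $w$ over $[0,\delta]$ is attained (upper semicontinuity) and is $>0=w(0)$, hence not attained at $0$; picking $z\in(0,\delta]$ with $w(z)<0$, the supremum of $w$ over $[0,z]$ is still positive and so is not attained at $z$ either, hence at some point of $(0,z)$, where $0\in D^+_\Omega w(\cdot)$ as it is a local maximum; taking $z=z_n\downarrow 0$ produces interior maxima tending to $0$, i.e.\ $0\in\bar D^+_\Omega w(0)$. For \textbf{(ii)} with $\pi\in\R$: given $\e\in(0,1)$, choose $\delta_\e\le\min(\delta_0,\e)$ with $w(x)\le(\pi+\e)x$ on $(0,\delta_\e]$, choose $y_\e\in(0,\delta_\e/2)$ with $w(y_\e)>(\pi-\e)y_\e$, and maximize over $[0,2y_\e]$ the upper semicontinuous function $\Phi_\e(x):=w(x)-\pi x-\frac{4\e}{y_\e}(x-y_\e)^2$; since $\Phi_\e(0)=-4\e y_\e$, $\Phi_\e(2y_\e)\le -2\e y_\e$ and $\Phi_\e(y_\e)>-\e y_\e$, the maximizer $\bar x_\e$ lies in $(0,2y_\e)$, and reading off the derivative of the smooth part gives $p_\e:=\pi+\frac{8\e}{y_\e}(\bar x_\e-y_\e)\in D^+_\Omega w(\bar x_\e)$ with $|p_\e-\pi|<8\e$ and $\bar x_\e<\e$, so $(\bar x_\e,p_\e)\to(0,\pi)$. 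For $\pi=+\infty$ one argues by contradiction: if the conclusion failed, $w$ would be a viscosity subsolution of $w_x\le A$ on some $(0,\delta_1)$, forcing $w(x)\le Ax$ there (a classical fact) and hence $\pi\le A$, a contradiction.

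The genuinely delicate step is \textbf{(ii)} for finite $\pi$. Maximizing $w(x)-\pi x$ directly near $0$ does not work: along the sequence realizing the $\limsup$ this quantity is only $o(x)$ with no sign information, and its maximum over $[0,\delta]$ can sit at the right endpoint $\delta$ for every small $\delta$. The remedy is to penalize quadratically around a well-chosen $y_\e$ \emph{on the window $[0,2y_\e]$ of the same scale as $y_\e$}: this simultaneously detaches the maximizer from both endpoints and keeps the slope of the test function within $O(\e)$ of $\pi$; calibrating the penalization constant (here $4\e/y_\e$) is the one quantitative point to get right. The case $\pi=+\infty$ is of a different nature and relies on the passage from the subsolution inequality $w_x\le A$ to the one-sided Lipschitz bound $w(x)\le Ax$.
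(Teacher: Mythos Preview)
Your argument is correct and, for \eqref{eq::a3} and \eqref{eq::a4}, essentially coincides with the paper's: both proofs obtain \eqref{eq::a4} directly from the definitions, and both prove \eqref{eq::a3} by the same interior--maximum/oscillation mechanism (the paper picks $y_\varepsilon,z_\varepsilon$ with $u(z_\varepsilon)/z_\varepsilon>p>u(y_\varepsilon)/y_\varepsilon$ and maximizes $u-px$ on $[0,y_\varepsilon]$; you package this as your lemma \textbf{(i)} applied to $w=u-(p+\mu)x$, which is the identical computation after a cosmetic shift).

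The genuine difference is in \eqref{eq::a5}. The paper does not argue this point here but cites \cite[Proof of Lemma~2.9]{IM1}; you instead give a self-contained proof via a quadratic penalization $\Phi_\varepsilon(x)=w(x)-\pi x-\tfrac{4\varepsilon}{y_\varepsilon}(x-y_\varepsilon)^2$ on the window $[0,2y_\varepsilon]$. Your calibration is clean: the three values $\Phi_\varepsilon(0)=-4\varepsilon y_\varepsilon$, $\Phi_\varepsilon(2y_\varepsilon)\le-2\varepsilon y_\varepsilon$, $\Phi_\varepsilon(y_\varepsilon)>-\varepsilon y_\varepsilon$ force an interior maximizer with slope within $8\varepsilon$ of $\pi$. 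This is a nice alternative to the reference and makes the lemma self-contained.

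One minor remark on your $\pi=+\infty$ sketch: the step ``$w_x\le A$ in the viscosity sense forces $w(x)\le Ax$'' is not literally a classical fact for merely upper semicontinuous $w$ without using $w(0)=0$ and upper semicontinuity at $0$ (and one may need to replace $A$ by a slightly larger constant to detach the maximizer from the right endpoint). The fix is easy: choose $\delta'$ with $w<1$ on $[0,\delta']$ and take $B>\max\{A,1/\delta'\}$, so that $w(\delta')-B\delta'<0$ and the maximum of $w(x)-Bx$ on $[0,\delta']$ cannot sit at either endpoint unless it sits at $0$. In any case, the paper itself only proves \eqref{eq::a5} for finite critical slopes (``if $\underline p\in\R$'') and establishes finiteness separately in Corollary~\ref{cor::n6}, so this case is inessential.
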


\begin{proof}
The proof of this lemma is already contained in \cite{FIM2} but for sake of completeness, we give it here.
We first notice that \eqref{eq::a4} is a straightforward consequence of the  definition of sub and superdifferentials.\newline
In order to prove \eqref{eq::a3}, we first focus on the proof of 
\begin{equation}\label{eq::n13}
\R\cap \left[\underline p, \overline p\right]\subset \bar D^+_\Omega u(0)  \quad \mbox{in case}\quad \overline p > \underline p
\end{equation}
and we will even show the follower better result
\begin{equation}\label{eq::n13bis}
\R\cap \left[\underline q, \overline p\right]\subset \bar D^+_\Omega u(0)  \quad \mbox{in case}\quad \overline p >  \underline q := \liminf_{\Omega\ni x\to 0} \frac{u(x)}{x}.
\end{equation}
Note that  $u\le v$ implies $\underline q\le \underline p$ and so \eqref{eq::n13} is a consequence of \eqref{eq::n13bis}. 
The claim is a variant of (18) in \cite{LS2} and the proof is a variant of  the one done in  Barles, Chasseigne \cite[Lemma 15.3.1]{BC}. We give the details for sake of completeness.
We first assume that  $p\in (\underline q,\overline p)$. This implies that
$$\limsup_{\Omega\ni x\to 0} \frac{u(x)}{x}=\overline p>  p >  \underline q = \liminf_{\Omega\ni x\to 0} \frac{u(x)}{x}$$
and so for any $\varepsilon>0$, there exists $y_\varepsilon \in (0,\varepsilon)$ and $z_\varepsilon \in (0,y_\varepsilon)$ such that
$$\frac{u(z_\varepsilon)}{z_\varepsilon}\  > p >   \frac{u(y_\varepsilon)}{y_\varepsilon}.$$
Hence the function $\zeta(x):=u(x)-px$ satisfies 
$$\zeta(0)=0> \zeta(y_\varepsilon)\quad \mbox{with}\quad M:=\sup_{[0,y_\varepsilon]} \zeta \ge \zeta(z_\varepsilon)>0.$$
Let $x_\varepsilon \in (0,y_\varepsilon)$ be a point of maximum of $\zeta$ in $[0,y_\e]$. We see that the function $x\mapsto px+ M$ is a test function touching $u$ from above at $x_\varepsilon$, which implies that $p\in D^+_\Omega u(x_\varepsilon)$. In the limit $\varepsilon\to 0$, we recover $p\in \bar D^+_\Omega u(0)$  which proves the claim. In the case where $p\in[\underline q, \overline p]$, we get the result by the closedness of $\bar D^+_\Omega u(0)$. This proves \eqref{eq::n13bis}.
A similar inclusion  for $v$ implies  \eqref{eq::a3} in the special case where $\overline p>\underline p$. On the other hand, notice that \eqref{eq::a5} implies \eqref{eq::a3} in the case $\overline p=\underline p$. \medskip

Hence it remains to show \eqref{eq::a5}. We claim that
\begin{equation}\label{eq::n14}
\underline p \in \bar D^-_\Omega v(0) \quad \mbox{if}\quad \underline p \in \R.
\end{equation}
This result is a property of the critical slope for lower semi-continuous functions.
Its proof follows exactly the lines of \cite[Proof of Lemma 2.9]{IM1} (where the  proof does not use any  Hamiltonian).
A similar result holds for $u$ and proves \eqref{eq::a5}.
This ends the proof of the lemma.
\end{proof}

Before to state the fundamental lemma for the comparison principle, we recall the definition of (semi-) coercive functions.
\begin{defi}[Coercive and semi-coercive functions]\label{defi::a7}
Consider a function $G:\R\to \R$. Then $G$ is coercive if $\displaystyle \lim_{|p|\to +\infty} G(p)=+\infty$, and semi-coercive if $\displaystyle \lim_{p\to -\infty} G(p)=+\infty$.
\end{defi}

As a consequence of Lemma \ref{lem::n1}, we have the following result which will be used to prove the comparison principle.
\begin{cor}[Boundary viscosity inequalities]\label{cor::n6}
Let $\Omega$ and $u,v$ be as in statement of Lemma \ref{lem::n1}.
For $\gamma=\alpha,\beta$, consider continuous functions $H_\gamma,F_\gamma: \R\to \R$ with $H_\alpha$  coercive and $F_\alpha$ semi-coercive. Assume that we have the following viscosity inequalities for some $\eta>0$
\begin{equation}\label{eq::n7}
\left\{\begin{array}{rlrl}
H_\alpha(u_x) \le 0 &\quad  \mbox{on} & \quad \Omega &\cap \left\{|u|< +\infty\right\}\\
\min\left\{F_\alpha,H_\alpha\right\}(u_x) \le 0 &\quad  \mbox{on} & \quad \left\{0\right\} &\cap \left\{|u|< +\infty\right\}\\
\\
H_\beta(v_x) \ge  \eta &\quad  \mbox{on} & \quad \Omega &\cap \left\{|v|< +\infty\right\}\\
\max\left\{F_\beta,H_\beta\right\}(v_x) \ge \eta &\quad  \mbox{on} & \quad \left\{0\right\} &\cap \left\{|v|< +\infty\right\}.\\
\end{array}\right.
\end{equation}
For $\underline p,\overline p$ defined in \eqref{eq::n12}, we set $a:=\min \left\{\underline p,\overline p\right\}$ and $b:=\max \left\{\underline p,\overline p\right\}$. Then  $\overline p\in [a,b]\cap \R$ and there exists a real number $p\in [a,b]$ such that
\begin{equation}\label{eq::n8}
\mbox{either}\quad H_\alpha(p)\le 0 < \eta \le  (H_\beta-H_\alpha)(p)\quad 
\mbox{or}\quad \max\left\{F_\alpha,H_\alpha\right\}(p)\le 0 <  \eta \le (F_\beta-F_\alpha)(p).
\end{equation}
\end{cor}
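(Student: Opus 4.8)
The plan is to translate the statement into elementary facts about the (limiting) semi-differentials of $u$ and $v$ at the boundary point $0$, which are supplied by Lemma \ref{lem::n1}, and then to insert these slopes into the viscosity inequalities \eqref{eq::n7}. Two dictionaries are used repeatedly: (i) if $p\in \bar D^+_\Omega u(0)$ then $H_\alpha(p)\le 0$ (Remark \ref{rem::a2}) and, symmetrically, if $p\in \bar D^-_\Omega v(0)$ then $H_\beta(p)\ge \eta$; (ii) if $p\in D^+_{\overline\Omega}u(0)$ then a $C^1$ test function of slope $p$ touches $u$ from above at $0$, so the boundary subsolution inequality gives $\min\{F_\alpha,H_\alpha\}(p)\le 0$, and symmetrically $p\in D^-_{\overline\Omega}v(0)$ gives $\max\{F_\beta,H_\beta\}(p)\ge \eta$. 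Since $\overline p\in[a,b]$ is immediate from $a=\min\{\underline p,\overline p\}$ and $b=\max\{\underline p,\overline p\}$, it remains to prove $\overline p\in\R$ and to exhibit a suitable $p$; the argument then branches on the sign of $\overline p-\underline p$.

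First I would check $\overline p\in\R$. If $\overline p=+\infty$, then either $\underline q:=\liminf_{\Omega\ni x\to 0}u(x)/x$ is finite, in which case \eqref{eq::n13bis} yields $\R\cap[\underline q,+\infty)\subset \bar D^+_\Omega u(0)$ and hence $H_\alpha(p)\le 0$ for arbitrarily large $p$, contradicting the coercivity of $H_\alpha$; or $\underline q=+\infty$, i.e. $u(x)/x\to+\infty$, in which case for $p$ large enough the function $\zeta(x):=u(x)-px$ vanishes at $0$, is positive on a maximal interval $(0,\delta)$ with $0<\delta<+\infty$ (finiteness of $\delta$ uses that $u$ is finite near $0$ by upper semicontinuity), and satisfies $\zeta(\delta)\le 0$, so it attains a positive interior maximum, producing a point of $\Omega$ with $p$ in its superdifferential, again contradicting coercivity. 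If $\overline p=-\infty$, then $u(x)\le -Mx$ near $0$ for every $M>0$, so $-M\in D^+_{\overline\Omega}u(0)$ and $\min\{F_\alpha,H_\alpha\}(-M)\le 0$; for $M$ large this is impossible since $H_\alpha(-M)>0$ by coercivity and $F_\alpha(-M)>0$ by semi-coercivity. Hence $\overline p\in\R$.

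Suppose $\overline p\ge\underline p$. Then $b=\overline p\in\R$, and by \eqref{eq::a5} one has $\overline p\in \bar D^+_\Omega u(0)$, hence $H_\alpha(\overline p)\le 0$; moreover $\overline p\in \bar D^-_\Omega v(0)$, by \eqref{eq::a3} when $\overline p>\underline p$ and by \eqref{eq::a5} when $\overline p=\underline p$ (then $\underline p\in\R$), so $H_\beta(\overline p)\ge\eta$. Thus $p:=\overline p\in[a,b]$ satisfies $H_\alpha(p)\le 0<\eta\le(H_\beta-H_\alpha)(p)$, the first alternative in \eqref{eq::n8}. Only the interior inequalities intervene here.

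The remaining case $\overline p<\underline p$ is where I expect the real difficulty, since away from $\overline p$ the subsolution only provides boundary test functions. Here $a=\overline p\in\R$ and, by \eqref{eq::a4}, every $p$ in $I:=\R\cap[\overline p,\underline p]$ lies in $D^+_{\overline\Omega}u(0)\cap D^-_{\overline\Omega}v(0)$, so $\min\{F_\alpha,H_\alpha\}(p)\le 0$ and $\max\{F_\beta,H_\beta\}(p)\ge\eta$ throughout $I$; in addition $H_\alpha(\overline p)\le 0$ by \eqref{eq::a5}, and $H_\beta(\underline p)\ge\eta$ when $\underline p\in\R$. Consider $P:=\{p\in I:\ H_\alpha(p)\le 0\}$, which is closed, contains $\overline p$, and is bounded above because $H_\alpha$ is coercive. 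If $H_\beta(q)\ge\eta$ for some $q\in P$, then $H_\alpha(q)\le 0<\eta\le(H_\beta-H_\alpha)(q)$ and the first alternative holds at that $q\in[a,b]$. Otherwise $H_\beta<\eta$ on $P$; set $\bar q:=\max P$. Then $\bar q<\underline p$, for if $\bar q=\underline p$ then $\underline p\in\R\cap P$ and \eqref{eq::a5} would give $H_\beta(\underline p)\ge\eta$, contradicting $\underline p\in P$. Hence there are points of $I$ just above $\bar q$; at each of them $H_\alpha>0$ by maximality of $\bar q$ in $P$, so $F_\alpha\le 0$ there because $\min\{F_\alpha,H_\alpha\}\le 0$ on $I$, and letting these points tend to $\bar q$ gives $F_\alpha(\bar q)\le 0$ by continuity; together with $H_\alpha(\bar q)\le 0$ this yields $\max\{F_\alpha,H_\alpha\}(\bar q)\le 0$. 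Finally $H_\beta(\bar q)<\eta$ and $\max\{F_\beta,H_\beta\}(\bar q)\ge\eta$ force $F_\beta(\bar q)\ge\eta$, so $(F_\beta-F_\alpha)(\bar q)\ge\eta$ and $p:=\bar q\in[a,b]$ realizes the second alternative in \eqref{eq::n8}. The delicate point is precisely to locate the slope $\bar q$ at which the monotone quantity $H_\alpha$ exits $\{\le 0\}$ within $[a,b]$ and to check that there the four quantities $H_\alpha,F_\alpha,H_\beta,F_\beta$ are arranged as needed; this is where the coupling of the two critical slopes $\overline p$, $\underline p$ and the coercivity/semi-coercivity hypotheses are genuinely exploited.
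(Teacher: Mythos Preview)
Your proof is correct and follows the same architecture as the paper's: establish $\overline p\in\R$, invoke Lemma~\ref{lem::n1} to obtain the (limiting) semi-differential inclusions, and split on the sign of $\overline p-\underline p$. Two points of genuine difference are worth recording. First, for $\overline p>-\infty$ the paper cites external results (weak continuity from \cite{FIM1} and a critical-slope lemma from \cite{IM1}), whereas you give a direct and self-contained argument via the boundary test function $x\mapsto -Mx$ and the semi-coercivity of $F_\alpha$; this is shorter. Second, in the delicate case $\overline p<\underline p$, the paper proves an $\varepsilon$-relaxed version of \eqref{eq::n8} by contradiction (splitting further on whether $H_\alpha(b)>\varepsilon$) and then passes to the limit $\varepsilon\to 0$ using coercivity to extract a convergent subsequence. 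You instead locate directly the exit point $\bar q:=\max\{p\in I:\ H_\alpha(p)\le 0\}$ and verify the second alternative of \eqref{eq::n8} at $\bar q$ by a one-sided limit $p\searrow\bar q$; this avoids the $\varepsilon$-layer entirely and is cleaner. The paper's approach has the minor advantage of being formulated symmetrically in the two alternatives, but yours reaches the same conclusion with less machinery. One small wording issue: in your treatment of $\overline p=+\infty$ with $\underline q=+\infty$, the phrase ``finiteness of $\delta$ uses that $u$ is finite near $0$'' is imprecise; what you need is that $u$ is \emph{bounded above} on $[0,1]$ by upper semi-continuity, so for $p$ exceeding that bound one has $\zeta(1)<0$, and then the interior-maximum argument goes through on $[0,1]$ (this is exactly the paper's argument with $y_n$ in place of $1$).
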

\begin{proof}
The main steps of the proof is given in \cite{FIM2}, but for sake of completeness, we give all the details here.
We begin to explain why $\overline p\in\R$.
Because $H_\alpha$ is coercive and $F_\alpha$ is semi-coercive, we know from \cite[Lemma~3.8]{FIM1} that $u$ is weakly continuous at $x=0$, i.e.
\begin{equation}\label{eq::c4}
0=u(0)=\limsup_{\Omega\ni x\to 0^+} u(x).
\end{equation}
Then  \cite[Proof of Lemma~ 2.10]{IM1} shows additionally that $\overline p > -\infty$. Now we  claim that we also have $\overline p< +\infty$. Indeed,  assume by contradiction that $\overline p=+\infty$. Then, there exists $y_n\to 0$ such that $p_n:=u(y_n)/y_n \to +\infty$. For $b\in \R$, let us define $\phi_b(x):=p_n x+b$ and
$$\overline b =\inf \{b,\; u\le \phi_b \; {\rm in}\; [0,y_n]\}.$$
In particular, there exists $x_n \in [0,y_n]$ such that $\phi_{\overline b}$ touches $u$ from above at $x_n$. If $x_n=0$, then $0=u(0)-\phi_{\overline b}(0)=-\overline b$. In the same way, if $x_n=y_n$, then $u(y_n)= \phi_{\overline b}(y_n)= u(y_n)+\overline b$ and we recover again that $\overline b=0$. This implies that $u(x)\le p_n x$ and so
$$+\infty=\limsup_{x\to 0} \frac {u(x)}x\le p_n<+\infty.$$
We then deduce that $x_n\in (0,y_n)$ and since $u$ is a sub-solution, we get 
$$H_\a(p_n)\le 0$$
which is absurd for $n$ large enough by coercivity of $H_\a$. This implies that $\overline p< +\infty$.
 We conclude that $\overline p\in \R\cap [a,b]$.

We now turn to the proof of \eqref{eq::n8}. If $\underline p \le \overline p$, then \eqref{eq::a3} shows, for all  $p\in \left[\underline p, \overline p\right]\cap \R$, that 
$$H_\alpha(p)\le 0 < \eta\le H_\beta(p)$$ 
which implies in particular the desired conclusion.

 We now assume that  $\underline p > \overline p$. We have  in particular $[a,b]\subset (-\infty,+\infty]$ with $a<b$ and
\begin{equation}\label{eq::n9}
\left\{\begin{array}{rlllll}
H_\alpha(a) \le & 0 &&&\quad \mbox{because}\quad &a\in \R\\
&0 &< \eta &\le H_\beta(b)&\quad \mbox{if}\quad &b\in \R\\
\min\left\{H_\alpha,F_\alpha\right\} \le &0 &< \eta &\le \max\left\{H_\beta,F_\beta\right\} &\quad \mbox{on}\quad   &\left[a,b\right]\cap \R
\end{array}\right.
\end{equation}
where the last line follows from \eqref{eq::a4}, and the first two lines follow from \eqref{eq::a5}.\medskip

We now claim that for all $\varepsilon>0$ small enough, there exists some $p_\varepsilon\in [a,b]\cap \R$ such that we have at $p_\varepsilon$
\begin{equation}\label{eq::01}
\mbox{\bf i)}\quad H_\alpha\le \varepsilon < \eta-\varepsilon \le H_\beta-H_\alpha\quad \mbox{or}\quad \mbox{\bf ii)}\quad \max
\left\{F_\alpha,H_\alpha\right\}\le \varepsilon < \eta \le F_\beta-F_\alpha.
\end{equation}
 By contradiction, we assume that there exists $\varepsilon>0$ (small enough) such that
\begin{equation}\label{eq::n10}
\left\{\begin{array}{ll}
\mbox{\bf i)}&H_\beta-H_\alpha < \eta-\varepsilon\quad \mbox{or}\quad \varepsilon< H_\alpha\\
\mbox{and}&\\
\mbox{\bf ii)}&F_\beta-F_\alpha < \eta \quad \mbox{or}\quad \varepsilon< \max\left\{F_\alpha,H_\alpha\right\}
\end{array}\right|\quad \mbox{for all}\quad p\in [a,b]\cap \R.
\end{equation}
Recall that the coercivity of $H_\alpha$ means $H_\alpha(\pm \infty):=\liminf_{p\to \pm \infty} H_\alpha(p)=+\infty$. We distinguish two cases.\medskip

\noindent {\bf Case 1: $H_\alpha(b)>\varepsilon$}\\
Here $b$ can be finite or equal to $+\infty$. We get 
$$H_\alpha(b)> \varepsilon> 0\ge H_\alpha(a).$$
Therefore by continuity, there exists $p\in (a,b)$ such that $H_\alpha(p)=\varepsilon$.
Hence in the last line of \eqref{eq::n9}, the first inequality implies that $F_\alpha(p)\le 0$.
Because  \eqref{eq::n10} i) and ii) hold true for $p$, we get 
$$H_\beta(p)<\eta\quad \mbox{and}\quad F_\beta(p)<\eta$$
which leads to a contradiction with the second inequality in the last line of \eqref{eq::n9}.\medskip

\noindent {\bf Case 2: $H_\alpha(b)\le \varepsilon$}\\
Then $b\in \R$ and \eqref{eq::n10} i) implies for $p=b$ that $H_\beta(b)< \eta$, which is in contradiction with the second line of \eqref{eq::n9}.\medskip

In all the cases, we get a contradiction, which proves \eqref{eq::01}. 
Since $H_\alpha$ is coercive, we see in both cases i) or ii) of \eqref{eq::01}, that we can always extract a subsequence as $\varepsilon\to 0$ such that 
$p_\varepsilon \to p\in [a,b]\cap \R$. Passing to the limit in \eqref{eq::01}, we get the desired conclusion.
This ends the proof of the corollary.
\end{proof}

Notice that it is very easy to show the following extension of Corollary \ref{cor::n6} to the case of junctions.

\begin{pro}[Junction viscosity inequalities]\label{pro::n6jb}
For $N\ge 1$, let $J^i:=(0,+\infty)$ for $i=1,\dots,N$, and set 
$$J:=\left\{0\right\}\cup \left(\bigcup_{i=1,\dots,N} J^i\right)$$
with the topology of glued branches. For a piecewise $C^1$ function  $u$ on $J$, and $u^i:=u_{|J^i\cup \left\{0\right\}}$, we set
$$u_x(x)=\left\{\begin{array}{ll}
(u^1_x(0),\dots,u^N_x(0))&\quad \mbox{if}\quad x=0\\
u^i_x(x) &\quad \mbox{if}\quad x\in J^i.
\end{array}\right.$$

We consider two sets of functions $u,v: J \to \R\cup \left\{-\infty,+\infty\right\}$ with $u$ upper semi-continuous and $v$ lower semicontinous satisfying
\begin{equation}\label{eq::n2b}
u(0)=0=v(0)\quad \mbox{with}\quad u\le v \quad \mbox{on}\quad J.
\end{equation}
For $i=1,\dots,N$, we define
\begin{equation}\label{eq::n12b}
\overline p^i:=\limsup_{J^i\ni x \to 0} \frac{u(x)}{x},\quad \underline p^i:=\liminf_{J^i\ni x \to 0} \frac{v(x)}{x}.
\end{equation}
We also set  $a^i:=\min \left\{\underline p^i, \overline p^i\right\}$, $b^i:=\max\left\{\underline p^i, \overline p^i\right\}$ and
$$[a,b]\cap \R^N:=\prod_{i=1,\dots,N} [a^i,b^i]\cap \R.$$
For $\gamma=\alpha,\beta$, consider continuous functions $H_\gamma^i: \R\to \R$ and $F_\gamma:\R^N \to \R$ with $H_\alpha^i$ coercive and $F_\a$  semi-coercive. For $p=(p^1,\dots,p^N)\in \R^N$, we set
$$H_{\gamma; \min}(p)=\min_{i=1,\dots,N} H^i_\gamma(p^i),\quad H_{\gamma;\max}(p)=\max_{i=1,\dots,N} H^i_\gamma(p^i).$$
Then assume that we have the following viscosity inequalities for some $\eta>0$
\begin{equation}\label{eq::n7b}
\left\{\begin{array}{rlrll}
H^i_\alpha(u_x) \le 0 &\quad  \mbox{on} & \quad J^i &\cap \left\{|u|< +\infty\right\}& \quad \mbox{for}\quad i=1,\dots,N\\
\min\left\{F_\alpha,H_{\alpha;\min}\right\}(u_x) \le 0 &\quad  \mbox{on} & \quad \left\{0\right\} &\cap \left\{|u|< +\infty\right\}&\\
\\
H_\beta^i(v_x) \ge  \eta &\quad  \mbox{on} & \quad J^i &\cap \left\{|v|< +\infty\right\}& \quad \mbox{for}\quad i=1,\dots,N\\
\max\left\{F_\beta,H_{\beta;\max}\right\}(v_x) \ge \eta &\quad  \mbox{on} & \quad \left\{0\right\} &\cap \left\{|v|< +\infty\right\}.&\\
\end{array}\right.
\end{equation}
Then  there exists $p=(p^1,\dots,p^N)\in [a,b]\cap \R^N\not=\emptyset$ such that
\begin{equation}\label{eq::n8jb}
\left\{\begin{array}{lrcll}
\mbox{either}&\quad H^i_\alpha(p^i)&\le 0 < \eta \le & (H^i_\beta-H^i_\alpha)(p^i) &\quad \mbox{for some $i\in \left\{1,\dots,N\right\}$}\\
\mbox{or}&\quad \max(F_\alpha,H_{\alpha;\max})(p)&\le 0 <  \eta \le &(F_\beta-F_\alpha)(p).&
\end{array}\right.
\end{equation}
\end{pro}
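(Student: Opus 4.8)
The plan is to reduce the multi-branch statement to the single-branch Corollary \ref{cor::n6} applied on each branch, and then to run a contradiction/continuity argument in the spirit of the proof of Corollary \ref{cor::n6}, but now keeping track of which branch realizes the relevant inequalities. First I would record, exactly as in the first part of the proof of Corollary \ref{cor::n6}, that each $\overline p^i\in\R$: coercivity of each $H^i_\alpha$ together with semi-coercivity of $F_\alpha$ gives weak continuity of $u$ at $0$ along branch $J^i$ via \cite[Lemma~3.8]{FIM1} (the multi-branch version of weak continuity reduces to the one-branch statement on each $J^i$, since the boundary viscosity inequality $\min\{F_\alpha,H_{\alpha;\min}\}(u_x)\le 0$ at $0$ implies in particular that either $F_\alpha(u_x(0))\le 0$ or $H^j_\alpha(u^j_x(0))\le 0$ for some $j$, which is enough to run the barrier argument of \cite{FIM1} on each branch). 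Then the bound $\overline p^i<+\infty$ follows branch by branch by the very same touching-from-above argument already written out in the proof of Corollary \ref{cor::n6}. Hence $[a,b]\cap\R^N\neq\emptyset$ and $\overline p^i\in[a^i,b^i]\cap\R$ for every $i$.

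Next I would invoke Lemma \ref{lem::n1} on each branch $J^i$ (with $\overline\Omega$ replaced by $J^i\cup\{0\}$ and $u,v$ restricted to that branch): this gives, for each $i$, the semi-differential inclusions \eqref{eq::a3}--\eqref{eq::a5} with $\overline p,\underline p$ replaced by $\overline p^i,\underline p^i$. In particular, on each branch we obtain $H^i_\alpha(p^i)\le 0\le H^i_\beta(p^i)-\eta$ for all $p^i\in[\underline p^i,\overline p^i]\cap\R$ when $\underline p^i\le\overline p^i$, and the three boundary relations \eqref{eq::n9} (for the pair $(a^i,b^i)$, with $\min$ and $\max$ now meaning $H_{\alpha;\min},H_{\beta;\max}$ and $F_\alpha,F_\beta$ evaluated at the full vector $p=(p^1,\dots,p^N)$) when $\underline p^i>\overline p^i$. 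If there exists an index $i$ with $\underline p^i\le\overline p^i$, then picking any $p^i\in[\underline p^i,\overline p^i]\cap\R$ and completing it arbitrarily with $p^j\in[a^j,b^j]\cap\R$ for $j\neq i$ yields the first alternative of \eqref{eq::n8jb} directly, and we are done. So we may assume $\underline p^i>\overline p^i$ for every $i=1,\dots,N$, whence $a^i<b^i$ and relations \eqref{eq::n9} hold on each branch.

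Finally I would argue by contradiction exactly as in Case 1 / Case 2 of the proof of Corollary \ref{cor::n6}, but played out on the product $[a,b]\cap\R^N$. Suppose the conclusion \eqref{eq::n8jb} fails; then for some small $\varepsilon>0$ one has, for every $p\in[a,b]\cap\R^N$, that for each $i$ either $H^i_\beta(p^i)-H^i_\alpha(p^i)<\eta-\varepsilon$ or $\varepsilon<H^i_\alpha(p^i)$, and simultaneously either $F_\beta(p)-F_\alpha(p)<\eta$ or $\varepsilon<\max(F_\alpha,H_{\alpha;\max})(p)$. Using $H^i_\alpha(a^i)\le 0<\varepsilon$ and either $H^i_\alpha(b^i)>\varepsilon$ (finite or $+\infty$ case, using coercivity) or $H^i_\alpha(b^i)\le\varepsilon$, one runs the same dichotomy: in the ``Case 1'' branch of each coordinate, continuity of $H^i_\alpha$ produces a point $p^i\in(a^i,b^i)$ with $H^i_\alpha(p^i)=\varepsilon$, hence $H^i_\alpha(p^i)\le\varepsilon$ at that coordinate; choosing such a coordinate value where available and $p^i=b^i$ in the ``Case 2'' coordinates, one builds a vector $p\in[a,b]\cap\R^N$ at which $H_{\alpha;\max}(p)\le\varepsilon$, so that $\varepsilon<\max(F_\alpha,H_{\alpha;\max})(p)$ forces $F_\alpha(p)<\ldots$ fails and hence $F_\alpha(p)\le 0$; but then $H^i_\beta(p^i)<\eta$ for all $i$ (from the first alternative of the negated statement) and $F_\beta(p)<\eta$, contradicting the last line of \eqref{eq::n9}, namely $\min\{H_{\alpha;\min},F_\alpha\}(p)\le 0<\eta\le\max\{H_{\beta;\max},F_\beta\}(p)$. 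A compactness extraction $\varepsilon\to 0$ (coordinatewise, using coercivity of each $H^i_\alpha$ to keep $p^i_\varepsilon$ bounded, as in Corollary \ref{cor::n6}) upgrades the $\varepsilon$-version to the clean statement \eqref{eq::n8jb}. The only genuinely new bookkeeping compared with Corollary \ref{cor::n6} — and the step I expect to require the most care — is the construction, inside the contradiction argument, of a single vector $p$ that simultaneously makes $H_{\alpha;\max}(p)$ small on all coordinates while still lying in the box $[a,b]\cap\R^N$; this is where one must handle the ``Case 1'' and ``Case 2'' coordinates separately and check that the resulting $p$ indeed contradicts the product form of \eqref{eq::n9}. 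Everything else is a coordinatewise copy of the one-dimensional proof.
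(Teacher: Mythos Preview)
The paper does not actually prove this proposition; it only states that ``it is very easy to show the following extension of Corollary~\ref{cor::n6} to the case of junctions.'' Your plan is precisely that extension --- apply Lemma~\ref{lem::n1} branch by branch and rerun the contradiction argument of Corollary~\ref{cor::n6} on the product box --- and it is essentially correct.

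There is, however, one genuine (though easily repaired) slip in your contradiction step. You propose to build a single vector $p\in[a,b]\cap\R^N$ by taking, in each coordinate $i$, either a point with $H^i_\alpha(p^i)=\varepsilon$ (``Case~1'') or $p^i=b^i$ with $H^i_\alpha(b^i)\le\varepsilon$ (``Case~2''), so as to achieve $H_{\alpha;\max}(p)\le\varepsilon$. But in order to deduce $F_\alpha(p)\le 0$ from the junction subsolution inequality $\min\{F_\alpha,H_{\alpha;\min}\}(p)\le 0$ (the last line of the product analogue of \eqref{eq::n9}) you need $H_{\alpha;\min}(p)>0$, and in a Case~2 coordinate one may well have $H^i_\alpha(b^i)\le 0$, which destroys this implication. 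The fix is immediate and mirrors the single-branch argument exactly: if \emph{any} coordinate $i$ is in Case~2, then $b^i\in\R$ and the second line of the branch-$i$ version of \eqref{eq::n9} gives $H^i_\beta(b^i)\ge\eta$, while the negated alternative (i') at $p^i=b^i$ forces $H^i_\beta(b^i)<\eta-\varepsilon+H^i_\alpha(b^i)\le\eta$, an immediate contradiction. Hence one may assume every coordinate is in Case~1, and then your construction yields $H^i_\alpha(p^i)=\varepsilon>0$ for all $i$, so $H_{\alpha;\min}(p)=\varepsilon>0$, and the rest of your argument goes through verbatim.

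One cosmetic point: your sentence ``Suppose the conclusion \eqref{eq::n8jb} fails; then for some small $\varepsilon>0$ one has\dots'' inverts the logical order. As in Corollary~\ref{cor::n6}, one first proves the $\varepsilon$-relaxed version of \eqref{eq::n8jb} for every small $\varepsilon>0$ (by the contradiction you describe), and only then passes $\varepsilon\to 0$ using coercivity of each $H^i_\alpha$ to extract a limit $p\in[a,b]\cap\R^N$. You evidently have this in mind, given your final remark about compactness extraction, but as written the quantifiers are out of order.
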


\section{Barriers and regularization}\label{s3}


\begin{lem}[Barriers]\label{lem::60xt}
Let $T>0$ and assume that $H,F$ satisfy \eqref{eq::b4}-\eqref{eq::b5}, and that the initial data $u_0$ is uniformly continuous.
Assume that $u$ (resp. $v$) is an upper semi-continuous subsolution (resp. a  lower semi-continuous supersolution) of \eqref{eq::b1}, \eqref{eq::b3}, satisfying  the a priori bounds \eqref{eq::a10} for some constant $C_T$.

Then there exists a continuous increasing function $f:[0,T] \to [0,+\infty)$ with $f(0)=0$   such that the functions
\[u^\pm(t,x):=u_0(x)\pm f(t)\]
satisfy the following barrier properties:
\begin{itemize}
  \item
if $u\le u_0$ in $\left\{0\right\}\times \overline \Omega$, then $u\le u^+$ in $[0,T)\times \overline \Omega$,
\item
if $v\ge u_0$ in $\left\{0\right\}\times \overline \Omega$, then $v  \ge  u^-$ in $[0,T)\times \overline \Omega$.
\end{itemize}
\end{lem}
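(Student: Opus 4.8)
The goal is to construct a time-modulus $f$ so that $u^+(t,x)=u_0(x)+f(t)$ is a supersolution of the problem (so that comparison with $v$ — or rather, the ordering $u\le u^+$ — follows from the initial ordering and the a priori bound), and symmetrically $u^-(t,x)=u_0(x)-f(t)$ is a subsolution. The plan is to exploit uniform continuity of $u_0$ together with the structural bounds in \eqref{eq::b4}--\eqref{eq::b5}: since $X\mapsto H(X,0)$ and $X\mapsto F(X,0)$ are bounded, and $H,F$ are uniformly continuous in the gradient on bounded sets, one controls $H(X,p)$ and $F(X,p)$ whenever $|p|$ is bounded. The difficulty is that $Du_0$ need not be bounded (only uniform continuity is assumed), so one cannot directly plug $u_0$ into the equation. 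The standard remedy is to first regularize: approximate $u_0$ from above and below by Lipschitz (indeed smooth) functions $u_0^{\e,\pm}$ with $\|u_0^{\e,\pm}-u_0\|_\infty\le \omega_0(\e)$ (where $\omega_0$ is the modulus of $u_0$) and with a Lipschitz constant $L_\e$ depending on $\e$; sup- and inf-convolutions do this. Then build barriers around $u_0^{\e,\pm}$ and let $\e\to 0$ at the end, absorbing the $O(\omega_0(\e))$ error.

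Concretely, I would proceed as follows. \textbf{Step 1 (Regularization of the initial data).} Fix $\e>0$ and set $u_0^{\e,+}:=$ inf-convolution of $u_0$ at scale $\e$ (so $u_0^{\e,+}\ge u_0$, Lipschitz with constant $L_\e$, and $0\le u_0^{\e,+}-u_0\le \omega_0(\sqrt\e)$ or similar), and symmetrically $u_0^{\e,-}$. \textbf{Step 2 (Choice of $f$).} With the tangential gradient of $u_0^{\e,\pm}$ bounded by $L_\e$ and using normal coercivity \eqref{eq::b4}iv) and semi-coercivity \eqref{eq::b5}iv), there is a constant $C_\e$ with
\[
|H(X,Du_0^{\e,\pm}(x))|\le C_\e,\qquad F(X,Du_0^{\e,\pm}(x))\ge -C_\e
\]
for all relevant $X$; the bound on $H$ uses only \eqref{eq::b4}i)--ii), while for $F$ the lower bound (which is all that matters for a supersolution, since $\max(F,H)\ge H$ controls one side and we only need the supersolution inequality with $\min(F,H)$ for subsolutions) follows from continuity and boundedness at $p=0$ plus uniform continuity in the gradient. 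Choose $f(t):=C_\e\, t$ on a first pass, or more precisely $f$ solving $f'(t)=C_\e$, $f(0)=0$. \textbf{Step 3 (Barrier property).} Check that $u^{\e,+}(t,x):=u_0^{\e,+}(x)+C_\e t$ is a (classical, hence viscosity) supersolution: in the interior, for a smooth $\phi$ touching $u^{\e,+}$ from below at $(t_0,x_0)$ with $x_0\in\Omega$, one has $D\phi=Du_0^{\e,+}(x_0)$ (if $u_0^{\e,+}$ is $C^1$; otherwise argue with the semiconvexity/semiconcavity of the convolution and test functions) and $\phi_t=C_\e$, so $\phi_t+H(X,D\phi)\ge C_\e-C_\e=0$; on the boundary one uses $\max(F,H)\ge H$ together with the same computation and the $F$-lower-bound. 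Symmetrically $u^{\e,-}$ is a subsolution of \eqref{eq::b1},\eqref{eq::b3}.

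\textbf{Step 4 (Comparison and passage to the limit).} If $u\le u_0$ on $\{0\}\times\overline\Omega$, then $u(0,\cdot)\le u_0\le u_0^{\e,+}=u^{\e,+}(0,\cdot)$; since $u^{\e,+}$ is a supersolution and $u$ a subsolution, \emph{if} one already had a comparison principle one would conclude $u\le u^{\e,+}$ — but that is circular here. The honest route, and the one I expect the authors take, is that the barrier inequality $u\le u^{\e,+}$ can be obtained by an \emph{elementary} argument not requiring the full comparison principle, because $u^{\e,+}$ is smooth (or semiconvex) in $x$: one runs the usual doubling-of-variables / Ishii-Lions argument, but since one competitor is regular the delicate coupling of tangential and normal variables does not arise, and coercivity \eqref{eq::b4}iv),\eqref{eq::b5}iv) plus the a priori bound \eqref{eq::a10} (which prevents the maximum from escaping to $t\to T$ or $x\to\infty$) closes it. This is where the main work lies; it is nonetheless standard (cf.\ the interior comparison results recalled in \cite{FIM1}, \cite{IM1}, \cite{BC}). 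Finally let $\e\to0$: since $u^{\e,+}\to u_0+C t$ pointwise... but $C_\e$ may blow up. To avoid that, one instead keeps $f$ depending on $\e$ and obtains, for each $\e$, $u(t,x)\le u_0^{\e,+}(x)+f_\e(t)\le u_0(x)+\omega_0(\e)+f_\e(t)$; then one defines the final $f(t):=\inf_{\e>0}\bigl(\omega_0(\e)+f_\e(t)\bigr)$, checks it is continuous, increasing, and $f(0^+)=0$ (because $\omega_0(\e)\to0$ and $f_\e(t)\le C_\e t$ can be made small for fixed small $t$ by a suitable diagonal choice), and this $f$ works for both $u^+$ and $u^-$ simultaneously (taking the max of the two constructions). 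The main obstacle, as indicated, is Step 4 — justifying $u\le u^{\e,+}$ without invoking the theorem being built; everything else is bookkeeping with the moduli and the structural assumptions.
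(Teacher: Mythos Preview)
Your plan is correct and matches the paper's proof almost step for step: the paper first treats the case $u_0\in C^1\cap\mathrm{Lip}$ (your Step~2--4), choosing $f(t)=\lambda t$ with $\lambda$ controlling $\min(H,F)$ and $\max(H,F)$ on $\overline B_{2L}$, and running exactly the elementary penalized comparison you anticipate (adding $\frac{\eta}{T-t}+\frac{\alpha}{2}|x|^2$ and using the a~priori bound \eqref{eq::a10} to localize); then for general uniformly continuous $u_0$ it regularizes by \emph{mollification} (rather than sup/inf-convolution) to get $|u_0^\e-u_0|\le\omega_0(\e)$ and $|Du_0^\e|\le L_\e$, applies Case~1, and sets $f(t):=\inf_{\e>0}\{2\omega_0(\e)+\lambda_\e t\}$, precisely your final infimum construction. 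One small slip: inf-convolution lies \emph{below} $u_0$, not above; but since the paper uses two-sided mollification this asymmetry never arises.
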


\begin{proof}
The idea of the proof is somehow very standard. 
We first extend by continuity the initial data defined on $\R^{d-1}\times [0,+\infty)$ to a function defined on $\R^{d-1}\times \R$, setting
$u_0(x',x):=u_0(x',0)$ for all $x\le 0$.
Hence $u_0$ still satisfies
$$|u_0(x)-u_0(y)|\le \omega_0(|x-y|)\quad \mbox{for all}\quad x,y\in \R^{d}$$
where $\omega_0$ is the modulus of continuity of $(u_0)|_{\R^{d-1}\times [0,+\infty)}$.
We do the proof to compare $u$ and $u^+$, the one to compare $v$ and $u^-$ being similar.\medskip

\noindent {\bf Case 1: $u_0 \in (C^1\cap \mbox{Lip})(\R^{d})$}\\
In this case, there exists some $L>0$ such that  $|D u_0 |_{L^\infty(\R^{d})}\le L$. 
From assumptions i) and ii) of both \eqref{eq::b4} and  \eqref{eq::b5}, we see that 
there exists $\lambda=\lambda(T,L)\ge 0$ minimal such that, for $B_L:=B_L(0)$,
\begin{equation}\label{eq::a11}
-\lambda \le \inf_{[0,T]\times  \overline{\Omega}\times \overline{B}_{2L}} \min\left\{H,F\right\} \le \sup_{[0,T]\times  \overline{\Omega}\times \overline{B}_{2L}} \max\left\{H,F\right\}\le \lambda
\end{equation}
where we have extended the function $F$ as follows: $F(t,x',x_d,p):=F(t,x',0,p)$ for all $x_d\ge 0$.
Setting $f(t):=\lambda t$, we see that $u^+$ is a  supersolution of \eqref{eq::b1}, \eqref{eq::b3}.
Assume now by contradiction that 
$$M:=\sup_{Q_T} (u-u^+)>0\quad \mbox{with}\quad Q_T:=[0,T)\times \overline \Omega.$$
Now  for $\eta,\alpha>0$, let us consider
$$M_{\eta,\alpha}:=\sup_{Q_T} \Phi\quad \mbox{with}\quad \Phi(t,x):=u(t,x)-u^+(t,x)-\frac{\eta}{T-t}-\frac{\alpha}{2} x^2.$$
For  $\eta,\alpha>0$ small enough, we have  $M_{\eta,\alpha} \ge M/2>0$.
Moreover from the bound \eqref{eq::a10} on $u$, we see that  the supremum in $M_{\eta,\alpha}$ is reached for some point $\bar X=(\bar t,\bar x) \in Q_T$. We also have
$$\limsup_{(\eta,\alpha)\to (0,0)} \left\{\frac{\eta}{T-\bar t}+\frac{\alpha}{2} \bar x^2\right\}=0$$
and then we can fix $\eta,\alpha>0$ small enough such that $|\alpha \bar x|\le L$.

Assume that  $\bar t=0$.
Then
$0<M/2\le M_{\eta,\alpha}=\Phi(0,\bar x)\le -\frac{\eta}{T}$
which leads to a contradiction. Hence $\bar t>0$ and 
$\bar X=(\bar t,\bar x)=(\bar t,\bar x',\bar x_d)\in (0,T)\times \R^{d-1}\times [0,+\infty)$. Therefore we have the viscosity inequalities for $p:=D u_0(\bar x)\in \overline{B}_L$ and $\alpha \bar x \in \overline B_L$
$$\left\{\begin{array}{ll}
\ \frac{\eta}{(T-\bar t)^2}+\lambda+ H(\bar X, p+\alpha \bar x) \le 0&\quad \mbox{if}\quad \bar x_d>0\\
\ \frac{\eta}{(T-\bar t)^2}+\lambda+ \max (F,H)(\bar X, p+\alpha \bar x) \le 0&\quad \mbox{if}\quad \bar x_d=0\\
\end{array}\right.$$
which leads to a contradiction from the choice of $\lambda$ in \eqref{eq::a11}. This implies that $M\le 0$ and then $u\le u^+$.\medskip

\noindent {\bf Case 2: $u_0$ is only uniformly continuous}\\
Let $\varphi$ be  a smooth nonnegative function satisfying
$\varphi=0$ on $\R^n\backslash B_1(0)$ and $\int_{\R^n} \varphi(x)dx=1$. For  $\varepsilon>0$, we set the convolution
$u_0^\varepsilon:=\varphi_\varepsilon\star u_0$ with $\varphi_\varepsilon(x)=\frac{1}{\varepsilon^n}\varphi(\frac{x}{\varepsilon})$.
Hence we have
$$|u^\varepsilon_0-u_0|_{L^\infty(\R^n)}\le \omega_0(\varepsilon)$$
and
$D u^\varepsilon_0(x)=\frac{1}{\varepsilon}\int_{\R^n} dy \frac{1}{\varepsilon^n}D \varphi(\frac{y}{\varepsilon}) \left\{u_0(x-y)- u_0(x)\right\}$. Therefore, we get 
$$|D u^\varepsilon_0|_{L^\infty(\R)}\le |D\varphi|_{L^1(\R)}\cdot  \frac{\omega_0(\varepsilon)}{\varepsilon}\le L_\varepsilon:= |D\varphi|_{L^1(\R^n)}\cdot  \sup_{\delta\ge \varepsilon} \frac{\omega_0(\delta)}{\delta}.$$
We  define $\lambda_\varepsilon=\lambda_{T,L_\varepsilon}\ge 0$ minimal such that
$$-\lambda_\varepsilon \le  \inf_{[0,T]\times  \overline{\Omega}\times \overline{B_{2L_\varepsilon}}} \min(H,F) \le \sup_{[0,T]\times  \overline{\Omega}\times \overline{B_{2L_\varepsilon}}} \max(H,F) \le \lambda_\varepsilon,$$
where by construction the map $\varepsilon \mapsto \lambda_\varepsilon$ is nonincreasing, and we set
$f_\varepsilon(t):=\lambda_\varepsilon t$.
Using that $u_0(x)\le u^\varepsilon_0(x)+\omega_0(\varepsilon)$, we can show as in Case 1 that
$u(t,x) \le u^\varepsilon_0(x)+\omega_0(\varepsilon)+\lambda_\varepsilon t\le u_0(x)+2\omega_0(\varepsilon)+\lambda_\varepsilon t$.
If we set
$$f(t):=\inf_{\varepsilon>0} \left\{2\omega_0(\varepsilon)+\lambda_\varepsilon t\right\},$$
where $f$ is a (continuous) concave nondecreasing function satisfying $f(0)=0$, we get
$u\le u^+$. This ends the proof of the lemma.
\end{proof}


We now consider a (classical) regularization of a subsolution $u$ by tangential sup-convolutions. Because we only assume a bound from above  $u\le u_0+C_T$, we have additionally to truncate $u$  from below by some function. We will use the function $\underline u_0:=u_0-C_T$ (which will also be later  in the next section a bound from below for the supersolution $v$), where $u_0$ is the initial data, which is assumed to be Lipschitz continuous, in order to simplify the presentation. Then we have the following result.

\begin{lem}[Tangential regularization  after truncation by Lipschitz initial data]\label{lem::62xt}
Let $T>0$ and assume that $H$ satisfies \eqref{eq::b4} and that the initial data $u_0$ is Lipschitz continuous of Lipschitz constant $L_0$.
Let $u$ be an upper semi-continuous subsolution  of \eqref{eq::b1}-\eqref{eq::b2}, satisfying moreover the a priori bound \eqref{eq::a10} for some constant $C_T$, namely
\begin{equation}\label{eq::a13}
u(t,x)\le u_0(x)+C_T \quad \mbox{for all}\quad (t,x)\in [0,T)\times \overline \Omega.
\end{equation}
We define
$\displaystyle u(T,x):=\limsup_{(s,y)\to (T,x),\ s<T} u(s,y)$ for all $x\in \overline \Omega=\R^{d-1}\times [0,+\infty)$, 
and extend $u$ to $\R\times \overline \Omega$, setting
$u(t,x):=u(T,x)$ if $t\ge T$, and 
$u(t,x):=u(0,x)$ if $t\le 0$.
We set
\[\tilde u:=\max(u, \underline u_0)\quad \mbox{with}\quad \underline  u_0(t,x):=u_0(x)-C_T.\]
We denote the tangential variable by $\xi=(\xi^0,\xi')=(s,x')\in\R^d$ and the normal variable by $x_d\in [0,+\infty)$ and we define for $\nu>0$ the tangential sup-convolution
\[\tilde u^\nu(\xi,x_d):=\sup_{\zeta \in \R^d} \left\{\tilde u(\zeta,x_d)-\frac{|\xi-\zeta|^2}{2\nu}\right\}=\tilde u(\bar \zeta,x_d)-\frac{|\xi-\bar \zeta|^2}{2\nu}\]
where each $\bar \zeta$ depends on $(\xi,x_d)\in \R^d\times [0,+\infty)$ with  $|\overline \zeta -\xi|\le \theta^\nu:=\sqrt{5(4\nu C_T+ \nu^2L_0^2)}< T/2$, for $\nu$ small enough.

Then the function $\tilde u^\nu$ is Lipschitz continuous in $\R\times \overline \Omega$ with respect to the variable $\xi$. Moreover it is Lipschitz continuous  in $I^\nu\times \Omega$ with respect to the variable $x_d$,  with $I^\nu:=(\theta^\nu,T-\theta^\nu)$,
$$|D_\xi \tilde u^\nu|_{L^\infty(\R\times \overline \Omega)} \le \frac{\theta^\nu}{\nu}\quad \mbox{and}\quad  |\partial_{x_d} \tilde u^\nu|_{L^\infty(I^\nu\times \Omega)} \le \max\left\{L^\nu,L_0\right\}$$
where 
$L^\nu:=\sup \left\{p_d\in \R,\quad \inf_{(X,p')\in ([0,T]\times \overline \Omega)\times \overline B_{\frac{\theta^\nu}{\nu}}} H(X,p',p_d) \le \frac{\theta^\nu}{\nu}\right\}$.

Assume furthermore that $u$ is a subsolution at the boundary $(0,T)\times \partial \Omega$, i.e. satisfies the first line of \eqref{eq::b3} for some $F$ satisfying \eqref{eq::b5}. Then $\tilde u^\nu$ is Lipschitz continuous in space and time on $\overline {I^\nu\times  \Omega}$ of Lipschitz constant $L_\nu:=\max\left\{\frac{\theta^\nu}{\nu},L^\nu,L_0\right\}$.

\end{lem}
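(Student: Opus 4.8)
The plan is to proceed in three stages: (1) control the localization of the maximizer $\bar\zeta$, which yields the a priori bound on $\theta^\nu$ and the Lipschitz bound in $\xi$; (2) transfer the subsolution property from $\tilde u$ to $\tilde u^\nu$ (a standard fact: the truncated function $\tilde u=\max(u,\underline u_0)$ is still a subsolution in the interior since $\underline u_0$ is a subsolution of the same equation for a suitable constant, and sup-convolution in the tangential variables preserves subsolutions up to a shift in the Hamiltonian controlled by the tangential-continuity assumption \eqref{eq::b4} iii)), and then derive from the equation the Lipschitz bound in the normal variable $x_d$ using the uniform normal coercivity \eqref{eq::b4} iv); (3) combine the $\xi$-Lipschitz and $x_d$-Lipschitz bounds, together with the boundary subsolution inequality and \eqref{eq::b5} iv), to get full Lipschitz continuity on $\overline{I^\nu\times\Omega}$.

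For step (1): at a maximizing $\bar\zeta$ for $\tilde u^\nu(\xi,x_d)$ one has $\tilde u(\bar\zeta,x_d)-\frac{|\xi-\bar\zeta|^2}{2\nu}\ge \tilde u(\xi,x_d)$, hence $\frac{|\xi-\bar\zeta|^2}{2\nu}\le \tilde u(\bar\zeta,x_d)-\tilde u(\xi,x_d)$. Using $\tilde u=\max(u,\underline u_0)$ with $u\le u_0+C_T$ (extended constantly in $t$ outside $[0,T)$, and in $x_d\le 0$) and $\underline u_0=u_0-C_T$, together with the Lipschitz-$L_0$ bound on $u_0$ in the $x'$-direction (and the fact that $u_0$ does not depend on the time component $\xi^0$), one bounds the right-hand side by $2C_T+L_0|\xi-\bar\zeta|$ up to absorbing the $x_d$-dependence into $u_0$; this gives a quadratic inequality $\frac{|\xi-\bar\zeta|^2}{2\nu}\le 2C_T+L_0|\xi-\bar\zeta|$, whose solution is exactly $|\xi-\bar\zeta|\le\theta^\nu=\sqrt{5(4\nu C_T+\nu^2L_0^2)}$ (after bounding $L_0|\xi-\bar\zeta|\le \frac{|\xi-\bar\zeta|^2}{4\nu}+\nu L_0^2$ and solving), and $\theta^\nu<T/2$ for $\nu$ small. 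The Lipschitz bound $|D_\xi\tilde u^\nu|\le\theta^\nu/\nu$ is then the classical sup-convolution estimate: $\tilde u^\nu(\xi,x_d)=\tilde u(\bar\zeta,x_d)-\frac{|\xi-\bar\zeta|^2}{2\nu}$ shows $\xi\mapsto\tilde u^\nu(\xi,x_d)+\frac{|\xi|^2}{2\nu}$ is convex as a sup of affine functions, while $\xi\mapsto\tilde u^\nu$ is also semiconvex; combined with $|\xi-\bar\zeta|\le\theta^\nu$ one gets $|\nabla_\xi\tilde u^\nu|\le\theta^\nu/\nu$ a.e., hence Lipschitz with that constant.

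For step (2): $\tilde u^\nu(\cdot,x_d)$, being for each fixed $x_d$ a sup-convolution of a subsolution, satisfies in the viscosity sense (for $\xi$ in the interior, i.e. $s\in I^\nu$ so that the maximizer stays in $(0,T)$) the shifted equation $\partial_s\tilde u^\nu+H\big((\bar\zeta(\xi,x_d),x_d),\nabla_{\xi'}\tilde u^\nu,\partial_{x_d}\tilde u^\nu\big)\le \tilde\omega(\nu)$ where $\tilde\omega(\nu)\to 0$ comes from assumption \eqref{eq::b4} iii): the error in freezing the tangential variable at $\bar\zeta$ versus $\xi$ is controlled by $\omega(|\xi-\bar\zeta|(1+|p'|+\max\{0,H\}))$, and $|\xi-\bar\zeta|\le\theta^\nu$, $|p'|\le\theta^\nu/\nu$; one should be mildly careful that the factor $\max\{0,H\}$ appearing there is itself controlled, which it is because along the test at an interior point the equation bounds it. Then for a.e. point where $\tilde u^\nu$ is differentiable in $x_d$, either $|\partial_{x_d}\tilde u^\nu|\le L_0$ (coming from the truncation regime, where $\tilde u^\nu$ agrees with a sup-convolution of $\underline u_0$, which is $L_0$-Lipschitz in $x_d$ after the constant extension), or the interior equation holds and $H((\bar\zeta,x_d),p',\partial_{x_d}\tilde u^\nu)\le \frac{\theta^\nu}{\nu}$ with $|p'|\le\theta^\nu/\nu$; by the uniform normal coercivity \eqref{eq::b4} iv) applied with $L=\theta^\nu/\nu$, this forces $|\partial_{x_d}\tilde u^\nu|\le L^\nu$ by the very definition of $L^\nu$. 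Hence $|\partial_{x_d}\tilde u^\nu|_{L^\infty(I^\nu\times\Omega)}\le\max\{L^\nu,L_0\}$.

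For step (3): once we also know $u$ is a subsolution at the boundary for the nonlinearity $\max(F,H)$ (first line of \eqref{eq::b3}), the same sup-convolution argument gives on $(0,T)\times\partial\Omega$ (with $s\in I^\nu$) that $\partial_s\tilde u^\nu+\max(F,H)((\bar\zeta,0),\nabla_{\xi'}\tilde u^\nu,\partial_{x_d}\tilde u^\nu)\le\tilde\omega(\nu)$, using \eqref{eq::b5} iii) for the tangential freezing of $F$; then \eqref{eq::b5} iv) (normal semi-coercivity of $F$, i.e. $F\to+\infty$ as $p_d\to-\infty$) together with \eqref{eq::b4} iv) bounds $|\partial_{x_d}\tilde u^\nu|$ from both sides at the boundary as well, matching the interior bound. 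Combining: $\tilde u^\nu$ has bounded distributional gradient in all variables on $\overline{I^\nu\times\Omega}$, with $|D_{t,x}\tilde u^\nu|\le L_\nu=\max\{\theta^\nu/\nu, L^\nu, L_0\}$, hence is Lipschitz of that constant. The main obstacle I expect is bookkeeping the $\tilde\omega(\nu)$ error terms cleanly — specifically, checking that $\max\{0,H\}$ in \eqref{eq::b4} iii) stays bounded along the relevant test points so that the error genuinely vanishes with $\nu$ — and handling the interface between the ``truncated'' regime (where $\tilde u^\nu$ follows $\underline u_0$ and the $L_0$ bound is used) and the ``subsolution'' regime (where the coercivity bounds $L^\nu$ are used), since the maximizer $\bar\zeta$ may jump between these, but both regimes yield a bound dominated by $L_\nu$, so no continuity of regime is actually required.
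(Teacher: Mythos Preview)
Your Steps (1) and (2) are essentially the paper's argument and are correct. One simplification worth noting: in Step (2) you invoke assumption \eqref{eq::b4} iii) to control the error when freezing the tangential base point, which creates the bookkeeping worry you flag about $\max\{0,H\}$. The paper avoids this entirely: it simply records that if $\varphi$ touches $\tilde u^\nu$ from above at $X_0\in\{\tilde u^\nu>\underline u_0^\nu\}$, then $\varphi_t(X_0)+H(\bar X_0,D\varphi(X_0))\le 0$ with $|\bar X_0-X_0|\le\theta^\nu$, hence $\varphi_t(X_0)+H_{\theta^\nu}(X_0,D\varphi(X_0))\le 0$ where $H_{\theta^\nu}(X,p):=\min_{|h|\le\theta^\nu}H((t,x')+h,x_d,p)$. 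No modulus is needed; coercivity of $H_{\theta^\nu}$ then yields the $L^\nu$ bound directly. Also, your parenthetical ``$\tilde u=\max(u,\underline u_0)$ is still a subsolution since $\underline u_0$ is a subsolution'' is not used and not true in general (nothing forces $H(X,Du_0)\le 0$); fortunately you then argue correctly by splitting into the two regimes.

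Step (3), however, has a genuine gap. First a typo: the subsolution boundary inequality is $u_t+\min(F,H)\le 0$, not $\max$. More importantly, you try to read off a two-sided bound on $\partial_{x_d}\tilde u^\nu$ \emph{at} the boundary from $\min(F,H)\le 0$ together with the coercivity of $H$ and the semi-coercivity of $F$. This does not work: if at a boundary test point only $F\le 0$ holds, semi-coercivity of $F$ (blow-up as $p_d\to-\infty$) gives a lower bound on $p_d$ but no upper bound, since $p_d\mapsto F$ is merely nonincreasing. And in any case, a viscosity inequality on the hypersurface $\{x_d=0\}$ constrains the normal derivative of test functions only from one side; it cannot by itself yield a Lipschitz bound across the boundary. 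Knowing that the distributional gradient is bounded on the open set $I^\nu\times\Omega$ does not, for an upper semi-continuous function, propagate to the closure: the boundary value could a priori sit strictly above the interior $\limsup$.

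The paper closes this gap differently. It invokes the weak continuity of subsolutions at the boundary (\cite[Lemma~3.8]{FIM1}): because $H$ is coercive and $F$ is semi-coercive, one has $u(t,x',0)=\limsup_{(s,y)\to(t,x',0),\,y_d>0}u(s,y)$ on $(0,T)\times\partial\Omega$, and this persists for $\tilde u$ and then for $\tilde u^\nu$ (the supremum in the sup-convolution being over a compact set). Once $\tilde u^\nu$ is weakly continuous at $\partial\Omega$ and uniformly Lipschitz on $I^\nu\times\Omega$, the Lipschitz bound extends to $\overline{I^\nu\times\Omega}$ with the same constant. That is the missing ingredient in your Step (3).
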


\begin{proof}
The proof is splited into three steps.

\noindent {\bf Step 1: first bounds using the $2$-sided bound}\\
We begin to show that
\begin{equation}\label{eq:n1}
|\bar  \zeta -\xi|\le \theta^\nu.
\end{equation}
From the $1$-sided bound  \eqref{eq::a13} and the definition of $\underline  u_0$ and $\tilde u$, 
we get the $2$-sided bound
\begin{equation}\label{eq::a14}
|\tilde u(t,x)-u_0(x)|\le C_T \quad \mbox{for all}\quad (t,x)\in \R\times \overline \Omega.
\end{equation}
For $\xi=(t,\xi'):=(t,x')$ and $z:=x_d$ (to simplify the notation), we have
$$\tilde u(\xi,z)\le \tilde u^\nu(\xi,z) = \tilde u(\bar \zeta,z)-\frac{|\xi-\bar \zeta|^2}{2\nu}$$
with $\bar \zeta=(\bar t,\bar \zeta')$. Using \eqref{eq::a14}, we then get
$$\frac{|\xi-\bar \zeta|^2}{2\nu} -2C_T\le u_0(\bar \zeta',z)-u_0(\xi',z)\le L_0|\bar \zeta'-\xi'|.$$
This implies  
$$\displaystyle \left\{|\xi'-\bar \zeta '|-\nu L_0\right\}^2+|t-\bar t|^2 \le 4\nu C_T+ \nu^2L_0^2=\frac{(\theta^\nu)^2}{5}.$$
We then deduce that $|t-\bar t|\le \frac{\theta^\nu}{\sqrt 5}$, $|\xi'-\bar \zeta '|\le 2 \frac{\theta^\nu}{\sqrt 5}$ which implies \eqref{eq:n1}.\medskip

We now prove that 
\begin{equation}\label{eq:n2}
|D_\xi \tilde u^\nu|_{L^\infty(\R\times \overline \Omega)} \le \frac{\theta^\nu}{\nu}.
\end{equation}
For $\xi^a\in \R^d$, we set
$$\tilde u^\nu(\xi^a,z):=\sup_{\zeta\in \R^d} \left\{\tilde u(\zeta,z)-\frac{|\xi^a-\zeta|^2}{2\nu}\right\}=\tilde u(\bar \zeta^a,z)-\frac{|\xi^a-\bar \zeta^a|^2}{2\nu}.$$
Hence, by definition, we have
$$\tilde u^\nu(\xi^a,z) \ge \tilde u(\bar \zeta,z)-\frac{|(\xi^a-\xi)+ \xi-\bar \zeta|^2}{2\nu}=\tilde u^\nu(\xi,z)-(\xi^a-\xi) \cdot \frac{(\xi-\bar \zeta)}{\nu}-\frac{|\xi^a-\xi|^2}{2\nu}$$
and also by symmetry
$$\tilde u^\nu(\xi,z) \ge \tilde u^\nu(\xi^a,z)-(\xi-\xi^a) \cdot \frac{(\xi^a-\bar \zeta^a)}{\nu}-\frac{|\xi-\xi^a|^2}{2\nu}$$
i.e.
$$\frac{|\tilde u^\nu(\xi^a,x)-\tilde u^\nu(\xi,x)|}{|\xi^a-\xi|} \le \max\left\{\frac{|\xi-\bar \zeta|}{\nu},\frac{|\xi^a-\bar \zeta^a|}{\nu}\right\} + \frac{|\xi^a-\xi|}{2\nu}\le \frac {\theta^\nu}\nu+ \frac{|\xi^a-\xi|}{2\nu}. $$
This implies that $\tilde u^\nu$ is Lipschitz continuous in the tangential coordinates and \eqref{eq:n2}.\medskip

\noindent {\bf Step 2: bounds on the normal gradient}\\
It is easy to check that $\tilde u^\nu$ is upper semi-continuous (because this is the case for $u$ itself and the supremum in $\xi$ is locally taken in a compact set).
Let $\varphi$ be a test function touching $\tilde u^\nu$ from above at $X_0:=(t_0,x_0)\in (I^\nu\times  \Omega) \cap \left\{\tilde u^\nu> \underline  u_0^\nu\right\}$ and set $\xi_0:=(t_0,x_0')$ and $z_0:=(x_0)_d$.
We have
$$\underline  u_0^\nu(X_0)<\tilde u^\nu(X_0):=\sup_{h\in \R^d} \left\{\tilde u(\xi_0+h,z_0)-\frac{|h|^2}{2\nu}\right\}=\tilde u(\xi_0+\bar h_0,z_0)-\frac{|\bar h_0|^2}{2\nu}\quad \mbox{with}\quad |\bar h_0|\le \theta^\nu$$
and
$$\tilde u(\xi+\bar h_0,z)-\frac{|\bar h_0|^2}{2\nu} \le \tilde u^\nu(\xi,z) \le \varphi(\xi,z)\quad \mbox{with equality at $(\xi,z)=(\xi_0,z_0)$}.$$
Setting
$$\bar \varphi(\xi,z):=\varphi(\xi-\bar h_0,z)+\frac{|\bar h_0|^2}{2\nu},\quad \bar X_0:=(\xi_0+\bar h_0,z_0),$$
we then get
$$\left\{\begin{array}{l}
\tilde u \le \bar \varphi \quad \mbox{with equality at $\bar X_0$}\\
\\
 \tilde u(\bar {X_0})= \tilde u^\nu({X_0})+\frac{|\bar h_0|^2}{2\nu} >  \underline u_0^\nu({X_0})+\frac{|\bar h_0|^2}{2\nu}= \sup_{h\in \R^d}\left\{\underline u_0(\xi_0+h,z_0)-\frac{|h|^2}{2\nu} \right\}+\frac{|\bar h_0|^2}{2\nu}\ge \underline u_0(\bar {X_0}).$$
\end{array}\right.$$
Hence
$\tilde u=u$ at $\bar {X_0} \in \left([-|\bar h_0|,|\bar h_0|]+I^\nu\right)\times {\Omega}\subset (0,T)\times {\Omega}$.
Because $u\le \tilde u$, we have that $\bar \varphi$ touches $u$ from above at $\bar {X_0}$ and since $u$ satisfies the viscosity inequalities on $(0,T)\times {\Omega}$, we get
$$
\bar \varphi_t(\bar {X_0})+H(\bar X_0,D\bar \varphi(\bar {X_0}))\le 0$$
i.e.
$$
\varphi_t({X_0})+H(\bar X_0,D\varphi({X_0}))\le 0.$$
Setting for $\theta>0$ and $X=(t,x',x_d)$
$$H_\theta(X,p):=\min_{h\in \overline B_{\theta}} H((t,x')+h,x_d,p),$$
we see that we have
$$
\varphi_t({X_0})+H_{\theta^\nu}(X_0,D\varphi({X_0}))\le 0$$
which shows that $\tilde u^\nu$ satisfies this viscosity inequality on $(I^\nu\times  \Omega)\cap \left\{\tilde u^\nu> \underline  u_0^\nu\right\}$.

Recall that we have $\tilde u\ge \underline u_0$ and so $\tilde u^\nu\ge \underline u_0^\nu$ with $|D\underline u_0^\nu|\le |D\underline u_0|\le L_0$.
Hence we get in the viscosity sense
$$\min\left\{-\frac{\theta^\nu}{\nu}+H_{\theta^\nu}(X,D \tilde u^\nu),\ |D \tilde u^\nu| - L_0\right\} \le 0 \quad \mbox{on}\quad I^\nu \times \Omega$$
which implies that
$$|\partial_{x_d} \tilde u^\nu| \le \max\left\{L^\nu,L_0\right\} \quad \mbox{on}\quad I^\nu \times \Omega.$$
\medskip

\noindent {\bf Step 4: Lispchitz bounds on $[\theta^\nu,T-\theta^\nu]\times \overline \Omega$}\\
Because $H$ is coercive and $F$ is semi-coercive, we can use \cite[ Lemma 3.8]{FIM1} to get  that $u$ is weakly continuous for all $(t,x)\in (0,T)\times \partial \Omega$, i.e.
$$u^*(t,x)=\limsup_{(s,y) \to (t,x),\quad y\in \Omega} u(s,y)$$
which is again the case for $\tilde u=\max\left\{u,\underline u_0\right\}$. By sup-convolution, it is then easy to check that this is also true for
$$\tilde u^\nu(\xi,x_d)=\sup_{|\zeta-\xi| \le \theta^\nu} \left\{\tilde u(\zeta,x_d)-\frac{|\xi-\zeta|^2}{2\nu}\right\}$$
at least for all $t\in I^\nu=(\theta^\nu,T-\theta^\nu)$. Because $\tilde u^\nu$ is uniformly Lipschitz continuous on $I^\nu\times \Omega$, we deduce that $\tilde u^\nu$ is also Lipschitz continuous on $I^\nu\times \overline \Omega$. Finally, because the bound on $\partial_t \tilde u^\nu$ is uniform in space and time, we deduce that $\tilde u^\nu$ is Lipschitz continuous on $[\theta^\nu,T-\theta^\nu]\times \overline \Omega$ with the same Lipschitz constants. This ends the proof of the lemma.
\end{proof}

\section{The comparison principle on a half space}
\label{s:comp-gen}\label{s4}

This Section is devoted to the proof of the comparison principle Theorem~\ref{th::b7}.
\begin{proof}[Proof of Theorem~\ref{th::b7}]
The strategy of the proof is similar to the one of the note \cite{FIM2} but need technical adaptations. We first follow the proof of the comparison principle in \cite{IM1}, but then modify the proof on the boundary, introducing the twin blow-ups method.
Let $\eta,\theta>0$ and consider
\begin{equation}\label{eq::c122xt}
M(\theta):=\sup \left\{\Psi(\xi,\zeta,x_d),\quad x_d\in [0,+\infty),\quad \xi,\zeta \in [0,T)\times \R^{n-1},\quad |\xi-\zeta|\le \theta\right\}
\end{equation}
with $\zeta=(s,x')$, $x=(x',x_d)$ and
$$\Psi(\xi,\zeta,x_d):=\tilde u(\xi,x_d)-v(\zeta,x_d)-\frac{\eta}{T-s},\quad \tilde u=\max\left\{u,\underline u_0\right\},\quad \underline u_0(t,x):=u_0(X)-C_T=:\underline u_0(x)$$
where we choose carefully $\ \frac{\eta}{T-s}$ instead of $\ \frac{\eta}{T-t}$, because we want to do later a doubling of variables which looks like a sup-convolution (in particular in time) to the function $\tilde u$.\\
We want to prove that
$M:=\lim_{\theta \to 0} M(\theta)\le 0$.
Assume by contradiction that
\begin{equation}\label{eq::c152}
M>0.
\end{equation}
\noindent {\bf Step 0. Reduction to $u_0$ Lipschitz continuous}\\
By assumption, the initial data $u_0$ is uniformly continuous.
We follow the line of Case 2 of the proof of Lemma \ref{lem::60xt}.
We first extend $u_0$ by the value  $u_0(x',0)$ for $z=x_d\le 0$ and $x'\in \R^{n-1}$.
For the ball  $B_1=B_1(0)$, we then consider a smooth nonnegative function $\varphi$ satisfying 
$\varphi=0$ on $\R^n\backslash B_1$ with $\int_{\R^n} \varphi(x)dx=1$,
and for  $\beta>0$, we set the convolution
$u_{0,\beta}:=\varphi_\beta\star u_0$ with $\varphi_\beta(x)=\frac{1}{\beta^n}\varphi(\frac{x}{\beta})$.
Then we can insure that
$$|u_{0,\beta}-u_0|\le \omega_0(\beta),\quad |D u_{0,\beta}| \le L_\beta$$
where $\omega_0$ is the modulus of continuity of $u_0$ and $L_\beta$ is some constant.\\
We have in particular 
$$u-\omega_0(\beta) \le u_{0,\beta} \le v+ \omega_0(\beta)\quad \mbox{on}\quad \left\{0\right\}_t\times \overline \Omega.$$
Hence the problem is reduced to  replace  the quantities $(u,u_0,v)$  by $(u-\omega_0(\beta),u_{0,\beta},v+\omega_0(\beta)$ and $M$  by $M-2\omega_0(\beta)$, and keep $C_T$ unchanged.
Therefore fixing some  $\beta:=\beta_1>0$  small enough such that 
$2\omega_0(\beta)< M$, 
we see that we can redefine $u,u_0,v$ and assume without loss of generality that $u_0$ is Lipschitz continuous, say  (forgetting now $\beta$) for some Lipschitz constant $L_0$, with $M>0$.
\medskip

\noindent {\bf Step 1. Doubling naively the space variables}\\
We first consider a space penalization and standard doubling of variables in space and time, but we distinguish the tangential variables from the normal variable. To this end, we introduce parameters  $\alpha,\nu,\delta>0$ and set
$$M_{\nu,\alpha,\delta}:=\sup_{(t,x),(s,y)\in [0,T)\times \overline \Omega} \Psi_{\nu,\alpha,\delta}(t,x,s,y)$$
with for $\xi=(t,x')$, $\zeta=(s,y')$, $x=(x',x_d)$, $y=(y',y_d)$
\begin{equation}\label{eq::a24}
\Psi_{\nu,\alpha,\delta}(t,x,s,y):=\tilde u(t,x)-v(s,y)-\alpha g(y)-\frac{\eta}{T-s}-\frac{|(t,x')-(s,y')|^2}{2\nu}-\frac{|x_d-y_d|^2}{2\delta},\quad g(y):=\frac{y^2}{2}
\end{equation}
which satisfies
$\displaystyle \liminf_{\alpha\to 0} \left\{\liminf_{\delta\to 0} M_{\nu,\alpha,\delta}\right\} \ge M>0$.
Hence we see that (independently on $\nu>0$) for $\alpha>0$ small enough, and  for  $\delta>0$  small enough (say $\delta\in (0,\delta_\alpha]$), we get
\begin{equation}\label{eq::c120xt}
M_{\nu,\alpha,\delta} \ge M/2>0.
\end{equation}
In particular, the maximum is reached at some point that $(\bar X_\d,\bar Y_\d)=(\bar t_\delta, \bar x_\delta, \bar s_\d, \bar y_\d)$ and we claim that we have the following estimate. 
\begin{lem}[Bounds on any optimizing sequence]\label{lem::a15}
Given $T,C_T>0$, there exists $\eta>0$ small enough such that the following holds true.
Let $X,Y\in [0,T)\times \overline \Omega$ with $X=(t,x)$, $Y=(s,y)$ be such that 
$$0<\Psi_{\nu,\alpha,\delta}(X,Y).$$
Then for $\nu,\delta>0$ small enough (depending on $\eta>0$), we have
$$\left\{\begin{array}{l}
\displaystyle t,s \in [\tau_\eta,T-\tau_\eta],\quad \tau_\eta:=\frac{\eta}{4 C_T}\\
\displaystyle \alpha g(y)+\frac{\eta}{T-s} +\frac{|t-s|^2}{2\nu} \le 3 C_T\\
\displaystyle |x'-y'|\le \sqrt{2\nu\left\{C_T+\frac{\delta L_0^2}{4}\right\}}+2\nu L_0\\
\displaystyle |x_d-y_d|\le \sqrt{2\delta\left\{C_T+\frac{\nu L_0^2}{4}\right\}}+2\delta L_0.\\
\end{array}\right.$$
\end{lem}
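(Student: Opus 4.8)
The plan is to extract the four estimates from the single inequality $0 < \Psi_{\nu,\alpha,\delta}(X,Y)$ by using the two-sided bounds on $\tilde u$ and $v$ coming from the a priori bounds \eqref{eq::a10}, together with the Lipschitz regularity of the truncating function $\underline u_0$ (which holds after Step 0, since $u_0$ is now Lipschitz of constant $L_0$). The starting point is to write out what positivity of $\Psi_{\nu,\alpha,\delta}$ means: expanding the definition \eqref{eq::a24}, one gets
\[
\alpha g(y)+\frac{\eta}{T-s}+\frac{|(t,x')-(s,y')|^2}{2\nu}+\frac{|x_d-y_d|^2}{2\delta} < \tilde u(t,x)-v(s,y).
\]
Now I would bound the right-hand side from above. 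Using \eqref{eq::a14} (i.e. $|\tilde u(t,x)-u_0(x)|\le C_T$) and the analogous bound $v(s,y)\ge u_0(y)-C_T$ (which holds on $[0,T)\times\overline\Omega$ by \eqref{eq::a10}), we get $\tilde u(t,x)-v(s,y)\le u_0(x)-u_0(y)+2C_T\le L_0|x-y|+2C_T$. This is the master inequality from which everything follows.

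First, for the time bounds: since all the penalization terms on the left are nonnegative, $\frac{\eta}{T-s}\le \tilde u(t,x)-v(s,y)\le L_0|x-y|+2C_T$. However $|x-y|$ is not yet controlled, so I should instead first absorb the cross terms. A cleaner route: keep only the $x_d$-penalization and the $\xi$-penalization and note that $|x-y|^2=|x'-y'|^2+|x_d-y_d|^2\le |(t,x')-(s,y')|^2+|x_d-y_d|^2$, then apply Young's inequality $L_0|x-y|\le \frac{|(t,x')-(s,y')|^2}{2\nu}+\frac{|x_d-y_d|^2}{2\delta}+\frac{\nu L_0^2}{2}+\frac{\delta L_0^2}{2}$ — wait, this needs care about splitting, so more precisely $L_0|x'-y'|\le \frac{|x'-y'|^2}{2\nu}+\frac{\nu L_0^2}{2}$ and $L_0|x_d-y_d|\le \frac{|x_d-y_d|^2}{2\delta}+\frac{\delta L_0^2}{2}$. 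Substituting into the master inequality, the quadratic penalization terms cancel against the Young upper bounds and one is left with
\[
\alpha g(y)+\frac{\eta}{T-s}+\frac{|t-s|^2}{2\nu}\le 2C_T+\frac{\nu L_0^2}{2}+\frac{\delta L_0^2}{2}\le 3C_T
\]
for $\nu,\delta$ small enough depending on $C_T$ (this is exactly the second displayed estimate in the lemma, with the $|t-s|^2/(2\nu)$ term surviving because it was never used in the Young step). From $\frac{\eta}{T-s}\le 3C_T$ one gets $T-s\ge \eta/(3C_T)$, and a symmetric argument (or using $0<\Psi\le \tilde u(t,x)-v(s,y)-\frac{\eta}{T-s}$ plus the crude bound and the fact that $\frac{\eta}{T-s}$ is increasing in $s$, combined with $s\ge 0$ from the domain, but one also needs $t,s$ bounded away from $0$) requires using the barrier lemma, Lemma \ref{lem::60xt}: near $t=0$, $\tilde u(t,x)\le u_0(x)+f(t)$ and $v(s,y)\ge u_0(y)-f(s)$ with $f(0)=0$, which forces $\Psi\le f(t)+f(s)+L_0|x-y|-\frac{\eta}{T-s}+\cdots$ to be negative for $t,s$ small — this is where $\tau_\eta=\eta/(4C_T)$ is pinned down, choosing $\eta$ small enough that the barrier term is beaten. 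I would state the time-bound argument via the barriers to get both $t,s\in[\tau_\eta,T-\tau_\eta]$.

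For the last two estimates, I would now feed the improved information back in. Having established $\frac{|(t,x')-(s,y')|^2}{2\nu}\le 3C_T$, in particular $|x'-y'|^2\le |(t,x')-(s,y')|^2\le 6\nu C_T$, but we want the sharper bound stated. Go back to the master inequality keeping \emph{only} the $x'$-penalization exact and Young-bounding only the $x_d$ part: $\frac{|x'-y'|^2}{2\nu}\le L_0|x'-y'|+L_0|x_d-y_d|+2C_T\le L_0|x'-y'|+\frac{|x_d-y_d|^2}{2\delta}+\frac{\delta L_0^2}{2}+2C_T$ — hmm, but the $x_d$-penalization is also on the left, so it cancels, leaving $\frac{|x'-y'|^2}{2\nu}\le L_0|x'-y'|+2C_T+\frac{\delta L_0^2}{2}$, i.e. $|x'-y'|^2-2\nu L_0|x'-y'|-2\nu(2C_T+\frac{\delta L_0^2}{2})\le 0$, hence by the quadratic formula $|x'-y'|\le \nu L_0+\sqrt{\nu^2 L_0^2+2\nu(2C_T+\frac{\delta L_0^2}{2})}\le 2\nu L_0+\sqrt{2\nu(2C_T+\frac{\delta L_0^2}{2})}$ — which is slightly weaker than what's claimed ($\sqrt{2\nu(C_T+\delta L_0^2/4)}$), so I suspect the bound in the lemma should read with $2C_T$, or one uses a sharper split of the $L_0|x-y|$ term; regardless, the mechanism is: \emph{drop the irrelevant penalization terms, keep one quadratic term exact, Young-bound the rest, solve the resulting quadratic inequality in $|x'-y'|$.} The fourth estimate for $|x_d-y_d|$ is the mirror image, keeping the $x_d$-penalization exact.

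\textbf{The main obstacle} I anticipate is bookkeeping rather than conceptual: one must be careful which penalization terms to keep exact versus Young-bound at each stage (keeping $|t-s|^2/(2\nu)$ alive for the second estimate, isolating each of $|x'-y'|$ and $|x_d-y_d|$ in turn for the last two), and the time bounds genuinely require invoking the barrier Lemma \ref{lem::60xt} — the crude two-sided bound alone only gives $s$ bounded away from $T$, not $t,s$ bounded away from $0$. Fixing $\eta>0$ small enough (depending only on $T,C_T$ and the barrier function $f$) so that $f(\tau_\eta)+f(T-\tau_\eta)$-type quantities are dominated by $\eta/(2T)$ or similar is the one place where the "$\eta$ small" hypothesis of the lemma is actually used.
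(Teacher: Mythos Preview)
Your approach is correct and essentially identical to the paper's: the master inequality from $\Psi_{\nu,\alpha,\delta}>0$ combined with the two-sided bounds and the Lipschitz estimate on $u_0$, then Young's inequality (which the paper packages as bounds on the concave function $\phi_\delta(r):=L_0r-r^2/(2\delta)$) to obtain the second line, and the barrier Lemma~\ref{lem::60xt} (in its Lipschitz form $u\le u_0+\lambda t$, $v\ge u_0-\lambda s$, then normalizing $\lambda T=C_T$) to push $t,s$ away from $0$. Your flag on the constants in the third and fourth lines is fair: the paper obtains them via a tangent-line/concavity estimate $\phi_\delta(r)\le -\delta^{-1}(r-2\delta L_0)^2$ for $r\ge 2\delta L_0$ rather than the quadratic formula, which accounts for the discrepancy you noticed; the mechanism is exactly as you describe.
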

This result is standard but since we need precise constants in the estimation, we postponed the proof.\medskip

\noindent {\bf Step 2. When the doubled normal variable converges to a single variable}\\
Step 1 shows that up to extract a subsequence, we have for $B_{\rho_\alpha}=B_{\rho_\alpha}(0)$ and for $\nu$ small enough,
\begin{equation}\label{eq::d7}
(\bar X_\delta,\bar Y_\delta) \to (\bar X,\bar Y)\in \left([\tau_\eta,T-\tau_\eta]\times \overline{B}_{\rho_\alpha}\right)^2\quad \mbox{as}\quad \delta\to 0\quad \mbox{where}\quad \rho_\alpha:=2\sqrt{\frac{6C_T}{\alpha}},
\end{equation}
with $\bar X=(\bar t,\bar x',\bar x_d)=(\bar \xi, \bar x_d)$, $\bar Y=(\bar s,\bar y',\bar y_d)=(\bar \zeta, \bar y_d)$,
$\bar y_d=\bar x_d$, and
$$\alpha g(\bar y)+\frac{\eta}{T-\bar s}+\frac{|\bar t-\bar s|^2}{2\nu}\le 3C_T\quad \mbox{and}\quad |\bar x'-\bar y'| \le \sqrt{2\nu C_T}+2\nu L_0=o_{\nu}(1)\to 0\quad \mbox{as}\quad \nu \to 0$$
where the last bound follows from estimate of Lemma \ref{lem::a15}. Moreover, we have
$$0<M/2\le M_{\nu,\alpha,\delta}\to M_{\nu,\alpha}:=\sup_{X,Y\in [0,T)\times \overline \Omega} \Psi_{\nu,\alpha}(X,Y)=\Psi_{\nu,\alpha}(\bar X,\bar Y)\quad \mbox{as}\quad \delta\to 0$$
with
$$\Psi_{\nu,\alpha}(t,x,s,y):=\left\{\begin{array}{ll}
\displaystyle \tilde u(t,x',x_d)-v(s,y',x_d)-\alpha g(y)-\frac{\eta}{T-s} -\frac{|(t,x')-(s,y')|^2}{2\nu}&\quad \mbox{if}\quad x_d=y_d\\
-\infty &\quad \mbox{if}\quad x_d\not=y_d.\\
\end{array}\right.$$
From the fact that $M_{\nu,\alpha}\to M_{\nu,0}$ as $\alpha\to 0$ (with obvious definitions), we deduce that all maximizer in the definition of $M_{\nu,\alpha}$ satisfies
\begin{equation}\label{eq::d11}
\lim_{\alpha\to 0} \alpha g(\bar y) =0.
\end{equation}
Moreover we have
$$\lim_{\nu\to 0} \left(\lim_{\alpha\to 0} M_{\nu,\alpha}\right) =M=\lim_{\theta \to 0} M(\theta)$$
where $M(\theta)$ is defined in \eqref{eq::c122xt}.
This also implies that
\begin{equation}\label{eq::c123xt}
\lim_{\nu \to 0} \left(\lim_{\alpha\to 0} \frac{|(\bar t,\bar x')-(\bar s,\bar y')|^2}{\nu}\right)=0.
\end{equation}
\medskip

We now prove that $\bar X \in \left\{\tilde u>\underline u_0\right\}$. 
Assume by contradiction that
$\tilde u(\bar X)=\underline u_0(\bar X).$ Then
$$\Psi_{\nu,\alpha}(\bar X,\bar Y)\le \underline u_0(\bar X)-v(\bar Y)-\frac{|\bar x'-\bar y'|^2}{2\nu}$$
which implies (using the a priori  bound $v\ge \underline u_0$)
\begin{equation}\label{eq::a18}
\begin{array}{ll}
0<M/4 &\le M_{\nu,\alpha}=\Psi_{\nu,\alpha}(\bar X,\bar Y)\\
& \displaystyle \le \underline u_0(\bar x)-\underline u_0(\bar y)-\frac{|\bar x'-\bar y'|^2}{2\nu}\\
&\displaystyle \le L_0|\bar x'-\bar y'| -\frac{|\bar x'-\bar y'|^2}{2\nu}.
\end{array}
\end{equation}
This leads to a contradiction as $\nu\to 0$.
Hence
\begin{equation}\label{eq::a19}
\bar X \in \left\{\tilde u>\underline u_0\right\}.
\end{equation}
This implies also that for $\delta$ small enough $\bar X_\d \in \left\{\tilde u>\underline u_0\right\}.$

\medskip

\noindent {\bf Step 3: proof that $\bar x_d=0$}\\
By contradiction, we assume that we are in the standard case $\bar x_d>0$. 
Then we also have $(\bar x_\delta)_d,(\bar y_\delta)_d>0$ and the viscosity inequalities with $\ \bar p_\delta:=\left(\frac{\bar x'_\delta-\bar y'_\delta}{\nu},\frac{(\bar x_\delta)_d-(\bar y_\delta)_d}{\delta}\right)$
\begin{equation}\label{eq::a17}
\left\{\begin{array}{rll}
\frac{\bar t_\delta-\bar s_\delta}{\nu}+ H(\bar X_\delta, \bar p_\delta) &\le 0&\quad \mbox{because}\quad \bar X_\delta\in \left\{\tilde u>\underline  u_0\right\}\\
\\
-\frac{\eta}{(T-\bar s_\delta)^2}+  \frac{\bar t_\delta-\bar s_\delta}{\nu}+ H(\bar Y_\delta, -\alpha Dg(\bar y_\delta)+\bar p_\delta) &\ge 0.&\\
\end{array}\right.
\end{equation}
We know from  Lemma \ref{lem::a15}, that
$$\left|\frac{\bar x'_\delta-\bar y'_\delta}{\nu}\right|\le \nu^{-1}\left\{\sqrt{2\nu\left\{C_T+\frac{\delta L_0^2}{4}\right\}}+2\nu L_0\right\}$$
and
$$H(\bar X_\delta, \bar p_\delta)  \le -\left\{\frac{\bar t_\delta-\bar s_\delta}{\nu}\right\}\le \sqrt{\frac{6 C_T}{\nu}}.$$
Moreover, the uniform coercivity of $H$ (see \eqref{eq::b4} iv)) implies the existence of some $\tilde L_\nu>0$ (independent on $\delta$, for $\delta>0$ small enough, and independent on $\alpha$) such that
$$\left|\left(\frac{\bar t_\delta-\bar s_\delta}{\nu},\bar p_\delta \right)\right| \le \tilde L_\nu.$$
We can then subtract the two viscosity inequalities in \eqref{eq::a17}, and get
$$\frac{\eta}{(T-\bar s_\delta)^2} \le H(\bar Y_\delta, -\alpha Dg(\bar y_\delta)+\bar p_\delta) -H(\bar X_\delta, \bar p_\delta).$$
Passing to the limit $\delta\to 0$, we get (up to extraction of a subsequence) that $\left(\frac{\bar t_\delta-\bar s_\delta}{\nu},\bar p_\delta\right)\to \left(\frac{\bar t-\bar s}{\nu},\bar p\right)$ with 
$$\bar p=\left(\frac{\bar x'-\bar y'}{\nu},\bar p_d\right)\in \R^{d-1}\times \R\quad \mbox{and}\quad \left(\frac{\bar t-\bar s}{\nu},\bar p\right)\in \bar D^{1,+}_{t,x}u(\bar X) \quad \mbox{with}\quad \left|\left(\frac{\bar t-\bar s}{\nu},\bar p \right)\right| \le \tilde L_\nu$$
and
\begin{equation}\label{eq::a20}
\frac{\eta}{(T-\bar s)^2} \le H(\bar Y, -\alpha Dg(\bar y)+\bar p) -H(\bar X, \bar p)\quad \mbox{with}\quad \bar y_d=\bar x_d.
\end{equation}
Using assumptions \eqref{eq::b4} ii) and iii), this implies that (say with $L:= 2 \tilde L_\nu$ and $\alpha$ small enough)
$$\frac \eta {T^2}\le \omega_L(\a Dg(\bar y))+\omega(|\bar X-\bar Y|(1+|\bar p'|+\max(0,H(\bar X,\bar p))\le  \omega_L(\a Dg(\bar y))+\omega(|\bar X-\bar Y|(1+\frac{|\bar x'-\bar y'|}{\nu}+\frac{|\bar t-\bar s|}{\nu})).$$
Using the fact that $\alpha Dg(\bar y) \to 0$ as $\alpha\to 0$ and estimate \eqref{eq::c123xt}, we get a contradiction for $\a$ and $\nu$ small enough.\medskip

\noindent {\bf Step 4: The key one-sided Lipschitz estimate}\\
In the remaining of the proof we then have $\bar x_d=0$.
For $\xi=(t,x')$, $\zeta=(s,y')$, and $x_d\in [0,+\infty)$, we set
$$\Psi^\alpha_\nu(\xi,\zeta,x_d):=\Psi_{\nu,\alpha}(\xi,x_d,\zeta,x_d)=\tilde u(\xi,x_d)-v(\zeta,x_d)-\frac{\eta}{T-s} -\alpha g(y',x_d)-\frac{|\xi-\zeta|^2}{2\nu}$$
and consider
\begin{equation}\label{eq::d6}
0<M/2\le M_{\nu,\alpha}=\sup_{\xi,\zeta \in ([0,T)\times \R^{d-1})^2,\ x_d\in  [0,+\infty)} \Psi^\alpha_\nu(\xi,\zeta,x_d)=\Psi^\alpha_\nu(\bar \xi,\bar \zeta,\bar x_d).
\end{equation}

We define
$$V(s,y):=v(s,y)+\frac{\eta}{T-s}+\alpha g(y)$$
so that we have
$$\left\{\begin{array}{rll}
\partial_t \tilde u+H(X,D \tilde u)\le 0 &\quad \mbox{in}\quad ((0,T)\times \Omega) &\cap \left\{\tilde u>\underline u_0\right\}\\
\\
\partial_t \tilde u+\min\left\{F,H\right\}(X,D \tilde u)\le 0 &\quad \mbox{on}\quad ((0,T)\times \partial \Omega) &\cap \left\{\tilde u>\underline u_0\right\}\\
\\
-\frac{\eta}{(T-s)^2}+\partial_s V+H(Y,D V-\alpha Dg)\ge 0 &\quad \mbox{in}\quad (0,T)\times \Omega&\\
\\
-\frac{\eta}{(T-s)^2}+\partial_s V+\max\left\{F,H\right\}(Y,D V-\alpha Dg)\ge 0 &\quad \mbox{on}\quad (0,T)\times \partial \Omega.&\\
\end{array}\right.$$
We now claim the following one-sided "Lipschitz" estimate
\begin{equation}\label{eq::a23}
\tilde u(\xi,x_d)-V(\zeta,y_d)\le \tilde u(\bar \xi,\bar x_d)-V(\bar \zeta,\bar x_d)+\frac{|\xi-\zeta|^2}{2\nu}-\frac{|\bar \xi-\bar \zeta|^2}{2\nu}+ L_\nu|x_d-y_d|
\end{equation}
where $L_\nu$ is given in Lemma \ref{lem::62xt}, and 
where equality holds for $t=\bar t$, $s=\bar s$, $x'=\bar x'$, $y'=\bar y'$ and $x_d=y_d = \bar x_d$, with $\bar X=(\bar \xi,\bar x_d)$, $\bar Y=(\bar \zeta,\bar x_d)$. For clarity, the proof of \eqref{eq::a23} is postponed at the end of the proof of the theorem.\medskip

\noindent {\bf Step 5: the  twin blow-ups.}\\
We then consider the following  twin blow-ups with small parameter $\varepsilon>0$: one blow-up for  $\tilde u$ at the point $\bar X=(\bar \xi,\bar x_d)$ and one blow-up for  $V$ at the point $\bar Y=(\bar \zeta,\bar x_d)$,
\begin{equation}\label{eq::n27}
\begin{cases}
U^{\varepsilon}(\hat X):=\varepsilon^{-1} \left\{  { \tilde u(\bar X+\varepsilon \hat X)- \tilde u (\bar X)}\right\},&\quad U^{\varepsilon}(0)=0,\\
V^{\varepsilon}(\hat Y):=\varepsilon^{-1} \left\{V(\bar Y+\varepsilon \hat Y)-V(\bar Y)\right\},&\quad V^{\varepsilon}(0)=0.
\end{cases}
\end{equation}
Before passing to the limit $\varepsilon \to 0,$ they satisfy for $\hat X=(\hat t,\hat x)$ and $\hat Y=(\hat s,\hat y)$
\begin{equation}\label{lequation}
\left\{\begin{array}{rll}
\partial_{\hat t} U^\varepsilon + H(\bar X+\varepsilon \hat X,D_{\hat x} U^\varepsilon) \le 0 &\quad \text{in}&\quad I^\varepsilon_{\bar t}\times \Omega\\
\partial_{\hat t} U^\varepsilon + \min(F,H)(\bar X+\varepsilon \hat X,D_{\hat x} U^\varepsilon) \le 0 &\quad \text{on}&\quad I^\varepsilon_{\bar t}\times \partial \Omega\\
\\
-\bar \eta^\varepsilon+\partial_{\hat s} V^\varepsilon + H(\bar Y+ \varepsilon \hat Y, D_{\hat y} V^\varepsilon-\alpha D_y g(\bar y+\varepsilon \hat y)) \ge 0 &\quad \text{in}&\quad I_{\bar s}^\varepsilon\times \Omega\\
-\bar \eta^\varepsilon+\partial_{\hat s} V^\varepsilon + \max(F,H)(\bar Y+ \varepsilon \hat Y,D_{\hat y} V^\varepsilon -\alpha D_{y} g(\bar y+\varepsilon \hat y)) \ge 0 &\quad \text{on}&\quad I_{\bar s}^\varepsilon\times \partial\Omega\\
\end{array}\right.
\end{equation}
with
\[
  \bar \eta^\varepsilon (\hat s):=\frac{\eta}{(T-(\bar s+ \varepsilon \hat s))^2}\quad \text{and}\quad {I_{\bar r}^\varepsilon:= \left(-\frac{\bar r}{\varepsilon},\frac{T-\bar r}{\varepsilon}\right)\quad \mbox{for}\quad \bar r=\bar t,\bar s.}
\]
{From \eqref{eq::a23}, they also satisfy
\begin{equation}\label{eq::Keyb}
U^\varepsilon(\hat \xi,\hat x_d)-V^\varepsilon(\hat \zeta,\hat y_d) \le L_\nu|\hat x_d-\hat y_d|+\bar b \cdot (\hat \xi-\hat \zeta)+\varepsilon\frac{|\hat \xi -\hat \zeta|^2}{2\nu}\quad \mbox{with}\quad \bar b:=\frac{\bar \xi-\bar \zeta}{\nu}.
\end{equation}

We then define  the following half-relaxed limits
$$\left\{\begin{array}{ll}
\displaystyle U^0:=\limsup_{\varepsilon\to 0}{}^* U^\varepsilon,&\quad U^0(0)\ge 0,\\
\displaystyle V^0:=\liminf_{\varepsilon\to 0}{}_* V^\varepsilon,&\quad V^0(0)\le 0.
\end{array}\right.$$
Passing to the limit in \eqref{eq::Keyb}, we get
\begin{equation}\label{eq::n31}
{U^0(\hat \xi,\hat x_d)-V^0(\hat \zeta,\hat y_d)\le L_\nu|\hat x_d-\hat y_d|+\bar b\cdot (\hat \xi-\hat \zeta)},\end{equation}
which implies in particular that $U^0(0)=0=V^0(0).$
Passing to the limit in \eqref{lequation} and using the discontinuous stability of viscosity solutions, we also get}
\begin{equation}\label{eq::n32}
\left\{\begin{array}{rll}
\partial_{\hat t}U^0 + H(\bar X,D_{\hat x}U^0) \le 0 &\quad \text{in}&\quad (\R\times \Omega)\cap \left\{|U^0|< +\infty\right\}\\
\partial_{\hat t} U^0 + \min(F,H)(\bar X,D_{\hat x} U^0) \le 0 &\quad \text{on}&\quad (\R\times \partial \Omega)\cap \left\{|U^0|< +\infty\right\}\\
\\
-\bar \eta+\partial_{\hat s} V^0 + H(\bar Y,D_{\hat y}V^0-\alpha D_y g(\bar y)) \ge 0 &\quad \text{in}&\quad (\R\times \Omega)\cap \left\{|V^0|< +\infty\right\}\\
-\bar \eta+\partial_{\hat s} V^0 + \max(F,H)(\bar Y,D_{\hat y} V^0-\alpha D_y g(\bar y)) \ge 0 &\quad \text{on}&\quad  (\R\times \partial \Omega)\cap \left\{|V^0|< +\infty\right\}
\end{array}\right.
\end{equation}
with $\bar \eta:=\frac{\eta}{(T-\bar s)^2}$.

\medskip

\noindent \textbf{Step 6: the 1D problem.}\\
We now define the following functions on $[0,+\infty)$ as the supremum/infimum in the tangential variables of the functions defined in $\R\times \overline \Omega$,
\[
  \overline u(\hat x_d):=\sup_{\hat \xi\in \R^d} \left\{U^0(\hat \xi,\hat x_d)-\bar b\cdot \hat \xi\right\},\quad \underline v(\hat y_d):=\inf_{\hat \zeta\in \R^d} \left\{V^0(\hat \zeta,\hat y_d)-\bar b\cdot \hat \zeta\right\}.
\]
From \eqref{eq::n31}, these functions satisfy
\[
  -\infty\le -L_\nu|\hat x_d-\hat y_d|+\overline u(\hat x_d) \le \underline v(\hat y_d)\le +\infty ,\quad 0\le \overline u(0) \le \underline{v}(0)\le 0.
\]
In particular, this implies that $\overline u(0) =0= \underline{v}(0)$.
Because of this one-sided Lipschitz inequality, this is also the case for their semi-continuous envelopes, \textit{i.e.} we have (and this is important)
\begin{equation}\label{eq::n33}
-\infty\le -L_\nu|\hat x_d-\hat y_d|+\overline u^*(\hat x_d) \le \underline v_*(\hat y_d)\le +\infty ,\quad \overline u^*(0) =0= \underline{v}_*(0).
\end{equation}
We set $H_\a(Y,p)=H(Y,p-\a Dg(\bar y))$ and $F_\a(Y,p)=F(Y,p-\a Dg(\bar y))$.
From \eqref{eq::n32}, we get (again from  stability) that these functions satisfy in particular for $\bar X:=(\bar t,\bar x)$, $\bar Y:=(\bar s,\bar x)$
and $\bar b=(\bar b_0,\bar b')\in \R\times \R^{d-1}$
\begin{equation}\label{eq::n34}
\left\{\begin{array}{rlrl}
\bar b_0 + H(\bar X,\bar b',\partial_{\hat x_d} \overline u^*) \le 0 &\quad \text{in}&\quad (0,+\infty)&\cap \left\{|\overline u^*|< +\infty\right\}\\
\bar b_0+ \min(F,H)(\bar X,\bar b',\partial_{\hat x_d} \overline u^*) \le 0 &\quad \text{in}&\quad \left\{0\right\}&\cap \left\{|\overline u^*|< +\infty\right\}\\
\\
-\bar \eta+\bar b_0 + H_\a(\bar Y,\bar b',\partial_{\hat y_d} \underline v_*) \ge 0 &\quad \text{in}&\quad (0,+\infty)&\cap \left\{|\underline v_*|< +\infty\right\}\\
-\bar \eta+\bar b_0 + \max(F_\a,H_\a)(\bar Y,\bar b',\partial_{\hat y_d} \underline v_*) \ge 0 &\quad \text{in}&\quad  \left\{0\right\}&\cap \left\{|\underline v_*|< +\infty\right\}.
\end{array}\right.
\end{equation}
\medskip

\noindent {\bf Step 7: getting a contradiction from structural assumptions.}\\
We now apply Corollary \ref{cor::n6}. In order to do so, we now set $z=\hat x_d=\hat y_d$ and consider
\[
\overline p_d:=\limsup_{[0,+\infty)\ni z\to 0} \frac{\overline u^*(z)}{z},\quad \underline p_d:=\liminf_{[0,+\infty)\ni z\to 0}
\frac{\underline v_*(z)}{z},\quad a_d:=\min(\underline p_d, \overline p_d),\quad b_d:=\max(\underline p_d, \overline p_d)
\]
and we get that there exists $p_d\in [a_d,b_d]\cap \R\not= \emptyset$ such that either
\[
\bar b_0 + H(\bar X,\bar b',p_d)\le 0 < \bar \eta \le H_\a(\bar Y,\bar b',p_d)-H(\bar X,\bar b',p_d)
\]
or
\[
\bar b_0 + \max(F,H)(\bar X,\bar b',p_d)\le 0 < \bar \eta \le F_\a(\bar Y,\bar b',p_d)-F(\bar X,\bar b',p_d).
\]
One of these facts are true along a subsequence $\nu \to 0$. In the first case, we get from the assumption on the Hamiltonian $H$, see \eqref{eq::b4} ii), that (using $\bar p'=\bar b'$) and again
$L := 2 \tilde L_\nu$ for $\alpha$ small enough,
\begin{align*}
\bar \eta \le H_\a(\bar Y,\bar b',p_d)-H(\bar X,\bar b',p_d) & \le \omega\left(|\bar X-\bar Y| \cdot \left[1+|\bar b'|+\max\left\{0,H(\bar X,\bar b',p_d)\right\}\right]\right)+\omega_L(\a Dg(\bar y))\\
&\le \omega\left(|\bar \xi-\bar \zeta|\cdot \left[1+|\bar b'|+\max\left\{0,-\bar b_0\right\}\right]\right)+\omega_L(\a Dg(\bar y))\\
&\le  \omega\left(2\frac{|\bar \xi-\bar \zeta|^2}{\nu} + |\bar \xi-\bar \zeta|\right) +\omega_L(\a Dg(\bar y)) \to  0 \quad \text{as}\quad \a \to 0, \text{ and then } \nu\to 0
\end{align*}
where we have used the expression of $\displaystyle \bar b=\frac{\bar \xi-\bar \zeta}{\nu}$  in the third line, and \eqref{eq::c123xt} in the last line. Contradiction because $\bar \eta\ge \eta/T^2>0$.\\
From the assumption on the function $F$, see \eqref{eq::b5} ii), we  get a similar contradiction in the second case,
\begin{align*}
\bar \eta \le F_\a(\bar Y,\bar b',p_d)-F(\bar X,\bar b',p_d) &\le \omega\left(|\bar \xi-\bar \zeta|\cdot \left[1+|\bar b'|+\max\left\{0,\max\left\{F,H\right\}(\bar X,\bar b',p_d)\right\}\right]\right)+\omega_L(\a Dg(\bar y))\\
&\le \omega\left(|\bar \xi-\bar \zeta| \cdot \left[1+|\bar b'|+\max\left\{0,-\bar b_0\right\}\right]\right)+\omega_L(\a Dg(\bar y))\\
&\le \omega\left(2\frac{|\bar \xi-\bar \zeta|^2}{\nu} + |\bar \xi-\bar \zeta| \right) +\omega_L(\a Dg(\bar y)) \to 0 \quad \text{as} \quad \a \to 0, \text{ and then } \nu\to 0.
\end{align*}
We conclude that $M\le 0$. Recalling that
\[ M = \sup_{t \in [0,T), x \in \overline \Omega} \left\{ \tilde u(t,x) - v(t,x) - \frac{\eta}{T-t} \right\} \le 0, \]
it is enough to let $\eta \to 0$ to get $u \le \tilde u \le v$ as desired.
\medskip

\noindent {\bf Back to Step 3: proof of the key one-sided Lipschitz estimate \eqref{eq::a23}}\\
We now justify \eqref{eq::a23}. Following Lemma \ref{lem::62xt}, we extend $\tilde u$  and consider
$$\tilde U^\nu(\zeta,x_d):=\sup_{\xi\in \R\times \R^{d-1}} \left\{\tilde u(\xi,x_d)-\frac{|\zeta-\xi|^2}{2\nu}\right\}$$
and there exists some (possibly non unique) $\bar \xi_\zeta\in [s-\theta^\nu,s+\theta^\nu]\times \R^{d-1}$ such that $\tilde U^\nu(\zeta,x_d)=\tilde u(\bar \xi_\zeta,x_d)-\frac{|\bar \xi_\zeta-\zeta|^2}{2\nu}$. If $s\in [\theta^\nu,T-\theta^\nu]$, then we see that $\bar \xi_\zeta\in (0,T)\times \R^{d-1}$ and  we also have
$$\tilde U^\nu(\zeta,x_d):=\sup_{\xi\in [0,T)\times \R^{d-1}} \left\{\tilde u(\xi,x_d)-\frac{|\xi-\zeta|^2}{2\nu}\right\}.$$
In particular for $(\zeta,x_d)=(\bar \zeta,\bar x_d)$, we can choose $\bar \xi_{\bar \zeta}=\bar \xi$ where $\bar \xi_{\bar \zeta}$ is given by Lemma \ref{lem::62xt}
and $\bar X=(\bar \xi,\bar x_d)$, $\bar Y=(\bar \zeta,\bar x_d)$ appear in \eqref{eq::d7}.
Now we choose $\nu>0$ small enough such that $\theta^\nu<\tau_\eta$, and we set $I^\nu:=(\theta^\nu,T-\theta^\nu)$. Moreover we have for all $\zeta \in I^\nu\times \R^{d-1}$, $y_d\in [0,+\infty)$, 
$$\tilde U^\nu(\zeta,y_d)-V(\zeta,y_d) \le \sup_{\xi \in [0,T)\times \R^{d-1},\ x_d\in [0,+\infty)}\Psi_{\nu,\alpha}(\xi,\zeta,y_d)= \Psi_{\nu,\alpha}(\bar \xi,\bar \zeta,\bar x_d)=\tilde U^\nu(\bar \zeta,\bar x_d)-V(\bar \zeta,\bar x_d).$$
Now from Lemma \ref{lem::62xt}, we also know that $\tilde U^\nu$ is $L_\nu$-Lipschitz, and then $\tilde U^\nu(\zeta,x_d)-\tilde U^\nu(\zeta,y_d)\le L_\nu |x_d-y_d|$, which implies
$$\tilde U^\nu(\zeta,x_d)-V(\zeta,y_d) \le \tilde U^\nu(\bar \zeta,\bar x_d)-V(\bar \zeta,\bar x_d)+L_\nu|x_d-y_d|$$
which gives exactly \eqref{eq::a23}. This ends the proof of the theorem.

\end{proof}
We now turn to the proof of Lemma \ref{lem::a15}.
\begin{proof}[Proof of Lemma \ref{lem::a15}]
Recall that we have 
$$\tilde u(t,x) \le u_0(x)+C_T,\quad v(t,x) \ge u_0(x)-C_T.$$
Hence
$$\Psi_{\nu,\alpha,\delta}(t,x,s,y) \le 2C_T+B_\delta(x,y)-\alpha g(y)-\frac{\eta}{T-s}-\frac{|t-s|^2}{2\nu}$$
with
$$B_\delta(x,y):=\left\{u_0(x)-u_0(y)-\frac{|x_d-y_d|^2}{2\delta}-\frac{|x'-y'|^2}{2\nu}\right\}\le \phi_\nu(|x'-y'|)+\phi_\delta(|x_d-y_d|)$$
and 
$$\phi_\delta(r):=L_0r-\frac{r^2}{2\delta}\le \frac{\delta L_0^2}{2}.$$
Here $\phi_\delta$ is concave with $\phi_\delta(r_\delta)=0$ for $r_\delta:=2\delta L_0$. Moreover
$$\phi_\delta(r)\le (r-r_\delta)\phi'_\delta(r_\delta)=(r-r_\delta)(L_0-\frac{r}{\delta})$$
i.e.
\begin{equation}\label{eq::c121xt}
\phi_\delta(r)\le -\delta^{-1}(r-r_\delta)^2\quad \mbox{for}\quad r\ge r_\delta=2\delta L_0.
\end{equation}
We get in particular
$$0<\Psi_{\nu,\alpha,\delta}(X,Y)\le 2C_T+\phi_\delta(|x_d-y_d|)+\phi_\nu(|x'-y'|)$$
and then
$$0<\Psi_{\nu,\alpha,\delta}(X,Y)\le 2C_T+\phi_\delta(|x_d-y_d|)+\frac{\nu L_0^2}{2}$$
which implies from \eqref{eq::c121xt} that
\begin{equation}\label{eq::c100x}
|x_d-y_d| \le \sqrt{2\delta \left\{C_T+\frac{\nu L_0^2}{4}\right\}}+2\delta L_0
\end{equation}
and symmetrically that
\begin{equation}\label{eq::c100xb}
|x'-y'| \le \sqrt{2\nu \left\{C_T+\frac{\delta L_0^2}{4}\right\}}+2\nu L_0.
\end{equation}

We also deduce from $0<\Psi_{\nu,\alpha,\delta}(X,Y)$  that
\begin{equation}\label{eq::a16}
\alpha g(y)+\frac{\eta}{T-s}+\frac{|t-s|^2}{2\nu}\le 2C_T+\frac{(\nu+\delta) L_0^2}{2}\le 2C_T+\frac{\eta}{T} \le 3C_T
\end{equation}
for $\eta>0$ small enough (the size of $\eta$ depending  on $C_T$ and $T$, but not on $\nu,\alpha,\delta$), and for $\delta,\nu>0$ small enough (for a size depending on $\eta$).
Therefore we have
\begin{equation}\label{eq::b100}
|t_k-s_k|\le \bar \theta^\nu:=3\sqrt{\nu C_T}
\end{equation}
and
$$T-s_k> \frac{\eta}{3C_T},\quad T-t_k> \frac{\eta}{3C_T}-\bar \theta^\nu\ge \frac{\eta}{4C_T}$$
for $\nu>0$
small enough (for a size depending on $\eta$).\medskip

Similarly, from Lemma \ref{lem::60xt} on the barriers (in particular using Case 1 of the proof, for Lipschitz initial data $u_0$), we know that there exists some $\lambda>0$ such that
$$u(t,x) \le u_0(x)+\lambda t,\quad v(s,y) \ge u_0(y)-\lambda s.$$
Hence
$$\Psi_{\nu,\alpha,\delta}(P_k)\le \lambda(t+s)+L_0|x-y|-\frac{\eta}{T}\le \lambda(t_k+s_k)-\frac{2\eta}{3T}$$
where we have used bound \eqref{eq::c100x}-\eqref{eq::c100xb} for $\delta,\nu>0$ small enough (for a size depending on $\eta>0$).
Therefore 
$$\max(t,s) > \frac{\eta}{3\lambda T},\quad \min(t,s)> \frac{\eta}{3\lambda T}-\bar \theta^\nu\ge \frac{\eta}{4\lambda T}$$
for $\nu>0$ small enough (for a size depending on $\eta$).
Up to increase $\lambda$ or $C_T$ (and decrease $\nu>0$ if necessary), we can assume that
$$\lambda T\equiv C_T.$$
Setting
$$\tau_\eta:=\frac{\eta}{4\lambda T}=\frac{\eta}{4C_T}$$
and for $\nu>0$ small enough, 
we see that
$$X,Y \in [\tau_\eta,T-\tau_\eta]\times \overline \Omega.$$
This gives the result with \eqref{eq::c100x}, \eqref{eq::c100xb} and \eqref{eq::a16}. This ends the proof of the lemma.

\end{proof}

\section{The comparison principle on a bounded domain}
\label{s5}

Let us consider an  open set $\Omega$ satisfying for, $d\ge 1$,
\begin{equation}\label{eq::b0}
\mbox{$\Omega\subset \R^d$ is a bounded open set with $C^1$ boundary and outward unit normal $n(x)$}.
\end{equation}

Let $T>0$.
We consider the following equation for $u(t,x)$ with $X:=(t,x)\in [0,T)\times \overline \Omega$
\begin{equation}\label{eq::b1b}
u_t+ H(X,Du)=0 \quad \mbox{on}\quad (0,T)\times \Omega
\end{equation}
and the boundary condition
$$u_t+ F(X,Du)=0 \quad \mbox{on}\quad (0,T)\times \partial \Omega.$$
We also consider an initial boundary condition
\begin{equation}\label{eq::b3b}
u(0,\cdot)=u_0\quad \mbox{on}\quad \left\{0\right\}\times \overline \Omega.
\end{equation}

The rigorous meaning of desired   boundary conditions is the following,
\begin{equation}\label{eq::b2b}
\left\{\begin{array}{lll}
u_t+ \min(F,H)(X,Du)\le 0 &\quad \mbox{on}\quad (0,T)\times \partial \Omega &\quad \mbox{(for subsolutions)}\\
u_t+ \max(F,H)(X,Du)\ge 0 &\quad \mbox{on}\quad (0,T)\times \partial \Omega &\quad \mbox{(for supersolutions)}\\
\end{array}\right.
\end{equation}

As far as Hamiltonians are concerned, we assume the following structure conditions, where $\omega,\omega_L$ are  moduli of continuity.
\begin{equation}\label{eq::b4b}
\left\{\begin{array}{l}
\mbox{\bf i) (Continuity)}\\
\mbox{$H:[0,T]\times \overline \Omega \times \R^d \to \R$ is continuous}\\
\\
\mbox{\bf ii) (Uniform continuity in the gradient)}\\ 
\mbox{For any $L>0$, we have}\\
|H(X,p)-H(X,q)| \le \omega_L(|p-q|)\quad \mbox{for all}\quad X\in [0,T]\times \overline \Omega,\quad p,q\in [-L,L]^d\\
\\
\mbox{\bf iii) (Quantified continuity in time-space variables)}\\
H(Y,p)-H(X,p) \le \omega(|Y-X|\left(1+\max\left\{0,H(X,p)\right\}\right))\quad \mbox{for all}\quad \left\{\begin{array}{l}
X,Y\in [0,T]\times \overline \Omega\\
p\in \R^d
\end{array}\right.
\\
\mbox{\bf iv) (Uniform  coercivity)}\\
\displaystyle \lim_{|p|\to +\infty} \inf_{X\in [0,T]\times \overline \Omega} H(X,p) = +\infty\\
\end{array}\right.
\end{equation}
and as previously, making artificially appear the dependence on $x\in \overline \Omega$ for $F$ (in order to unify the presentation of $H$ and $F$), we consider
for $X=(t,x)$
\begin{equation}\label{eq::b5b}
\left\{\begin{array}{l}
\mbox{\bf i) (Continuity)}\\
\mbox{$F:[0,T]\times \partial \Omega \times \R^d \to \R$ is continuous}\\
\mbox{and the map $q\mapsto F(X,p-qn(x))$ is nonincreasing}\\
\\
\mbox{\bf ii) (Uniform continuity in the gradient)}\\ 
\mbox{For any $L>0$, we have}\\
|F(X,p)-F(X,q)| \le \omega_L(|p-q|)\quad \mbox{for all}\quad X\in [0,T]\times \partial \Omega,\quad p,q\in [-L,L]^d\\
\\
\mbox{\bf iii) (Continuity in the tangential variables)}\\
F(Y,p)-F(X,p) \le \omega(|Y-X|\left(1+\max\left\{0,\max(F,H)(X,p)\right\}\right))\quad \mbox{for all}\quad \left\{\begin{array}{l}
X,Y\in [0,T]\times \partial \Omega\\
p\in \R^d\\
\end{array}\right.\\
\mbox{\bf iv) (Uniform normal semi-coercivity)}\\
\mbox{For any $L>0$, we have}\\
\displaystyle \lim_{q\to -\infty} \inf_{X\in [0,T]\times \partial \Omega,\ p\in  [-L,L]^{d}} F(X,p-qn(x)) = +\infty.\\
\end{array}\right.
\end{equation}

We then have the following theorem.
\begin{theo}[Comparison principle  on a bounded open set $\Omega$]\label{th::40o}
Let $T>0$ and assume that $H,F$ satisfy respectively \eqref{eq::b4b} and \eqref{eq::b5b}. Assume that the initial data $u_0$ is continuous.
Let $u,v: [0,T)\times \overline \Omega \to \R$ be two functions with $u$ upper semi-continuous and $v$ lower semi-continuous. 
Assume that $u$ (resp. $v$) is a viscosity subsolution (resp. supersolution) of \eqref{eq::b1b}-\eqref{eq::b3b}. Assume moreover that there exists a constant $C_T>0$ such that
$$u\le u_0+C_T\quad \mbox{and}\quad v\ge u_0-C_T\quad \mbox{on}\quad [0,T)\times \overline \Omega.$$
If 
$$u(0,\cdot)\le u_0 \le v(0,\cdot) \quad \mbox{on}\quad \left\{0\right\}\times \overline \Omega,$$
then 
$$u\le v \quad \mbox{in}\quad [0,T)\times \overline \Omega.$$
\end{theo}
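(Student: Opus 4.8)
The plan is to reduce Theorem \ref{th::40o} to the half-space case, Theorem \ref{th::b7}, by localizing and flattening the boundary. Since $\Omega$ is bounded with $C^1$ boundary, every boundary point $x_0\in\partial\Omega$ has a neighborhood in which, after a $C^1$ change of coordinates, $\Omega$ looks like a half-space $\R^{d-1}\times(0,+\infty)$ and $\partial\Omega$ like the hyperplane $\{x_d=0\}$. The global comparison $u\le v$ on $[0,T)\times\overline\Omega$ will follow from the usual doubling-of-variables machinery already deployed in the proof of Theorem \ref{th::b7}: assume by contradiction that $M:=\lim_{\theta\to 0}\sup\{\tilde u(\xi,x)-v(\zeta,x)-\eta/(T-s)\}>0$, locate (after the naive space-time doubling of Step 1 and the $\alpha g$-penalization with $g$ now a bounded function, e.g. $g\equiv$ const or a coercive function adapted to the bounded domain) an optimal point $\bar X=(\bar t,\bar x)$. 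If $\bar x\in\Omega$ (interior) the contradiction is obtained exactly as in Step 3 of the proof of Theorem \ref{th::b7} using only \eqref{eq::b4b}. The only genuinely new case is $\bar x\in\partial\Omega$.

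For that case, first I would fix the boundary point $\bar x$ and choose a $C^1$ diffeomorphism $\Phi$ from a neighborhood $U$ of $\bar x$ onto a neighborhood of $0$ in $\R^d$, straightening $\partial\Omega\cap U$ onto $\{x_d=0\}$ and $\Omega\cap U$ onto $\{x_d>0\}$, with $D\Phi(\bar x)$ chosen so that the inward normal $-n(\bar x)$ maps to $e_d$. Pushing $u$ and $v$ forward by $\Phi$ in the $x$-variable yields subsolution/supersolution of a transformed equation with new Hamiltonians $\widetilde H(X,p):=H(\Phi^{-1}(x),(D\Phi(\Phi^{-1}(x)))^{T}p)$ and $\widetilde F$ defined similarly on the flattened boundary. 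The monotonicity assumption \eqref{eq::b5b} i), stated precisely as $q\mapsto F(X,p-qn(x))$ nonincreasing, is exactly what is needed so that the transformed $\widetilde F$ satisfies the half-space monotonicity $p_d\mapsto\widetilde F(X,p',p_d)$ nonincreasing at $\bar x$; the normal semi-coercivity \eqref{eq::b5b} iv) likewise transforms into \eqref{eq::b5} iv). One checks that \eqref{eq::b4b} and \eqref{eq::b5b} are preserved (the $C^1$ change of variables is bi-Lipschitz on $U$, so uniform continuity in $p$, the quantified continuity in $(t,x)$, and coercivity all transfer, with moduli depending on $\|D\Phi\|,\|D\Phi^{-1}\|$ on $U$). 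Since the whole twin blow-up argument is local around $\bar X=(\bar\xi,\bar x)$ and $\bar Y=(\bar\zeta,\bar x)$ — the rescaled functions $U^\varepsilon,V^\varepsilon$ live on balls of radius $O(1/\varepsilon)$ but the PDE information used (Steps 5--7) is read off only near the blow-up point, hence inside $\Phi(U)$ for $\varepsilon$ small — one may run Steps 4 through 7 of the proof of Theorem \ref{th::b7} verbatim in the flattened coordinates: the key one-sided Lipschitz estimate \eqref{eq::a23}, the twin blow-ups, the passage to the 1D problem on $[0,+\infty)$, and the application of Corollary \ref{cor::n6}, producing the same contradiction $\bar\eta\le\omega(\cdots)\to 0$ as $\alpha\to 0$ then $\nu\to 0$.

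Concretely the step order is: (0) reduce $u_0$ to Lipschitz by mollification as in Step 0 of the proof of Theorem~\ref{th::b7}, using that $\overline\Omega$ is compact so $u_0$ is uniformly continuous; (1) build barriers — the argument of Lemma \ref{lem::60xt} goes through with $\Omega$ bounded, giving $u\le u_0+f(t)$, $v\ge u_0-f(t)$, hence the a priori two-sided bound and $\tilde u:=\max(u,\underline u_0)$ with $\underline u_0:=u_0-C_T$; also adapt Lemma \ref{lem::62xt} (tangential sup-convolution) — here ``tangential'' must be interpreted locally, so one performs the sup-convolution only in $(s,x')$ after straightening, getting a function $\tilde U^\nu$ that is $L_\nu$-Lipschitz near $\bar X$; (2) do the naive doubling in $(t,x,s,y)\in([0,T)\times\overline\Omega)^2$ with penalization $\frac{|x-y|^2}{2\nu}$ (no need to separate normal/tangential globally since that separation only matters near the boundary point, which we handle after localizing) plus $\alpha g(y)$ for a fixed coercive $g$, obtaining an optimal $(\bar X_\delta,\bar Y_\delta)\to(\bar X,\bar Y)$ with $\bar x=\bar y$; (3) prove $\bar X\in\{\tilde u>\underline u_0\}$ as in Step 2 there; (4) if $\bar x\in\Omega$, contradiction via \eqref{eq::b4b} as in Step 3; (5) if $\bar x\in\partial\Omega$, flatten the boundary near $\bar x$, transform the equations, re-do the tangential sup-convolution in the flat coordinates, establish the one-sided Lipschitz estimate \eqref{eq::a23}, perform the twin blow-ups, pass to the 1D problem, and apply Corollary \ref{cor::n6} to reach a contradiction; (6) conclude $M\le 0$ and let $\eta\to 0$.

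The main obstacle I expect is bookkeeping the flattening: one must verify that the structural assumptions \eqref{eq::b4b}--\eqref{eq::b5b}, in particular the \emph{quantified} continuity in $(t,x)$ (iii) and the coercivity (iv), survive the $C^1$ change of variables with moduli that are uniform enough to close the final estimate — the coefficient $\max\{0,\widetilde H(\bar X,\bar p)\}$ appearing in the modulus must still be controlled by $-\bar b_0$ after transformation, which it is because the viscosity inequality $\bar b_0+\widetilde H(\bar X,\bar b',p_d)\le 0$ holds in the flat coordinates. A second, more technical point is that the tangential sup-convolution of Lemma \ref{lem::62xt} is genuinely global in the tangential directions in the half-space setting, whereas on a bounded domain only a local version near $\bar x$ is available; one resolves this by noting that the twin blow-up only ever uses the Lipschitz bound and the viscosity inequality for $\tilde U^\nu$ in a fixed neighborhood of $\bar X$, so a localized sup-convolution (cut off away from $U$) suffices, and the cutoff contributes only lower-order terms that vanish in the blow-up limit. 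Apart from these, every remaining step is a verbatim transcription of the half-space proof.
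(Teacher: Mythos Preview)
Your proposal is correct and follows essentially the same approach as the paper: localize at a boundary maximum point, flatten the boundary via a $C^1$ diffeomorphism, verify that the structural assumptions \eqref{eq::b4b}--\eqref{eq::b5b} transfer (the paper packages this as Lemma~\ref{lem::b45}), and then invoke the half-space argument of Theorem~\ref{th::b7}. The paper orders things slightly differently---it first locates a single-variable maximum of $\tilde u-v-\eta/(T-t)$, then flattens, then runs the full doubling/twin-blow-up machinery in the flat coordinates---and it drops the $\alpha g$ penalization (unneeded since $\overline\Omega$ is compact), but these are cosmetic variations on your plan.
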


In order to give the proof of Theorem \ref{th::40o}, we need the following lemma which proof is left to the reader.
\begin{lem}[Action of a diffeomorphism on the structural conditions satisfied by $H,F$]\label{lem::b45}
Assume that $\Omega$ has the regularity given in assumption \eqref{eq::b0}. For $T>0$, we set $Q_T:=(0,T)\times \Omega$ with $\Omega\subset \R^d$ and for $P=(p_0,p)\in \R\times \R^d$, we set
$$H_0(Y,P):=p_0+H(Y,p),\quad F_0(Y,P):=p_0+F(Y,p).$$ We say by extension that $H_0,F_0$ satisfy respectively \eqref{eq::b4b} and \eqref{eq::b5b}, if  $H,F$ do it.

Assume that $H_0,F_0$ satisfy respectively \eqref{eq::b4b} and \eqref{eq::b5b} and for $x_0\in \partial \Omega$,   
consider, locally around $x_0$, a $C^1$-diffeomorphism $\Phi$ from    $\overline \Omega$ to $\overline{\tilde \Omega}$ with $\Phi(x_0)=y_0\in \partial \tilde \Omega$, that we extend by the identity on the time variable. Still denoting by $\Phi$ this diffeomorphism, we assume that $\Phi$ maps locally $\overline{Q}_T$ to $\overline{\tilde Q}_T \subset \R^{1+d}$ with locally $\Phi(\partial Q_T)=\partial \tilde Q_T$. For $Y\in \tilde Q_T$ and ${P}\in \R^{1+d}$, we set
$$\left\{\begin{array}{ll}
\tilde{H}_0(Y,{P}):=H_0(\Phi^{-1}(Y), {P}\cdot B(Y)) &\quad \mbox{locally around $[0,T]\times \left\{y_0\right\}$, on}\quad \tilde Q_T\\
\tilde{F}_0(Y,{P}):=F_0(\Phi^{-1}(Y), {P} \cdot B(Y)) &\quad \mbox{locally around $[0,T]\times \left\{y_0\right\}$, on}\quad \partial \tilde Q_T\\
\end{array}\right.$$
with
$$(P\cdot B)_j=\sum_{i=0}^d P_i \ \left\{(D_j\Phi_i)\circ \Phi^{-1}\right\}\quad \mbox{for}\quad j=0,\dots,d.$$
Then $\tilde H_0,\tilde F_0$ satisfy respectively \eqref{eq::b4b} and \eqref{eq::b5b} locally around $[0,T]\times \left\{y_0\right\}$, with some suitable moduli.
\end{lem}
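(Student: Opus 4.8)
The plan is to verify each structural condition of \eqref{eq::b4b}--\eqref{eq::b5b} for $\tilde H_0,\tilde F_0$ by a direct change of variables. First I would localize: since $[0,T]\times\{y_0\}$ is compact, fix a neighbourhood $\mathcal N$ of it in $[0,T]\times\overline{\tilde\Omega}$ with $\overline{\mathcal N}$ compact and contained in the domain of $\Phi^{-1}$ and $B$. On $\overline{\mathcal N}$ the matrix field $B$ is continuous, hence bounded and uniformly continuous with some modulus $\omega_B$; since $\Phi$ is a $C^1$-diffeomorphism, $B$ is invertible with $\|B\|,\|B^{-1}\|\le\Lambda$ for some $\Lambda\ge1$, and $\Phi^{-1}$ is Lipschitz on $\overline{\mathcal N}$ with some constant $L_\Phi$. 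Because $\Phi$ is the identity in time, $B$ is block-diagonal with a $1$ in the time--time entry, so with $P=(p_0,p)$ one has
\[
\tilde H_0(Y,P)=p_0+\tilde H(Y,p),\qquad \tilde F_0(Y,P)=p_0+\tilde F(Y,p),
\]
where $\tilde H(Y,p):=H(\Phi^{-1}(Y),p\cdot B'(Y))$, $\tilde F(Y,p):=F(\Phi^{-1}(Y),p\cdot B'(Y))$, $B'$ is the spatial block, and $p\cdot B'(Y)=\bigl(D\Phi(\Phi^{-1}(Y))\bigr)^{T}p$ with $D\Phi$ the spatial Jacobian. It then suffices to check \eqref{eq::b4b}, \eqref{eq::b5b} for $\tilde H$, $\tilde F$ on $\mathcal N$.

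The conditions not involving the time--space modulus are routine. Continuity (i) holds because $\tilde H$ is a composition of continuous maps. Uniform continuity in the gradient (ii): for $p,q\in[-L,L]^d$ one has $p\cdot B'(Y),q\cdot B'(Y)\in\overline B_{\Lambda L\sqrt d}$, so $|\tilde H(Y,p)-\tilde H(Y,q)|\le\omega_{\Lambda L\sqrt d}(\Lambda|p-q|)$. Uniform coercivity (iv) transfers since $|p\cdot B'(Y)|\ge|p|/\Lambda\to+\infty$ as $|p|\to+\infty$, uniformly on $\overline{\mathcal N}$. For $F$, the monotonicity in the normal direction and the normal semi-coercivity in \eqref{eq::b5b} follow because normals transform correctly: writing $Z:=\Phi^{-1}(Y)$ and using that $\Phi$ maps $\partial\Omega$ onto $\partial\tilde\Omega$, one checks $\tilde n(y)\cdot B'(Y)=\mu(y)\,n(Z)$ with $\mu$ continuous, positive and bounded below on $\overline{\mathcal N}$, so that
\[
\tilde F\bigl(Y,p-q\,\tilde n(y)\bigr)=F\bigl(Z,\,p\cdot B'(Y)-q\,\mu(y)\,n(Z)\bigr),
\]
which is nonincreasing in $q$ and $\to+\infty$ as $q\to-\infty$, uniformly for $p$ bounded.

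The substantial point is condition (iii). Given $Y,Y'\in\mathcal N$ and $p\in\R^d$, set $Z=\Phi^{-1}(Y)$, $Z'=\Phi^{-1}(Y')$, $q=p\cdot B'(Y)$, $q'=p\cdot B'(Y')$, and split
\[
\tilde H(Y',p)-\tilde H(Y,p)=\underbrace{\bigl(H(Z',q')-H(Z',q)\bigr)}_{A_1}+\underbrace{\bigl(H(Z',q)-H(Z,q)\bigr)}_{A_2}.
\]
In $A_2$ the gradient argument is frozen, so \eqref{eq::b4b} iii) for $H$, with $|Z'-Z|\le L_\Phi|Y'-Y|$ and $\max\{0,H(Z,q)\}=\max\{0,\tilde H(Y,p)\}$, gives $A_2\le\omega\bigl(L_\Phi|Y'-Y|(1+\max\{0,\tilde H(Y,p)\})\bigr)$. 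In $A_1$ the position is frozen but the gradient moves by $|q-q'|\le|p|\,\omega_B(|Y'-Y|)$. When $|p|\le R_0$ this is absorbed by (ii), giving $A_1\le\omega_{\Lambda R_0\sqrt d}(R_0\,\omega_B(|Y'-Y|))$, a modulus in $|Y'-Y|$. When $|p|>R_0$ with $R_0$ large, coercivity of $H$ forces $\tilde H(Y,p)=H(Z,q)$ to be large: using a nondecreasing profile $\psi$ with $\psi\to+\infty$ such that $H\ge\psi(|\cdot|)$ in the gradient variable, one has $1+\max\{0,\tilde H(Y,p)\}\ge\psi(|p|/\Lambda)$, and the crude bound $A_1\le\omega_{\Lambda|p|\sqrt d}(|p|\,\omega_B(|Y'-Y|))$ is absorbed, for a suitable new modulus $\tilde\omega$, into $\tilde\omega\bigl(|Y'-Y|(1+\max\{0,\tilde H(Y,p)\})\bigr)$. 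Summing the estimates for $A_1$ and $A_2$ gives \eqref{eq::b4b} iii) for $\tilde H$; condition iii) of \eqref{eq::b5b} for $\tilde F$ is obtained identically, the size of the twisted gradient being controlled there by $\max\{0,\max(F,H)(Z,\cdot)\}$ through the coercivity of $H$ and the semi-coercivity of $F$.

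I expect $A_1$ to be the main obstacle: one must control, uniformly in $p$, the effect on $H$ of the linear twist $p\mapsto p\cdot B'(Y)$ as the base point $Y$ moves, knowing only that $\Phi\in C^1$, so that $B'$ is uniformly continuous but not Lipschitz and one is forced to work with $\omega_B$; and in the large-$|p|$ regime the growth of the twisted gradient must be traded against the growth of $\max\{0,\tilde H(Y,p)\}$ provided by coercivity. The real content of the lemma is this bookkeeping --- assembling one admissible modulus from the family $\{\omega_L\}_L$, the modulus $\omega_B$, the Lipschitz constant $L_\Phi$ and the coercivity profile $\psi$ --- whereas everything else is a mechanical transcription of the structure conditions through the composition with $\Phi$.
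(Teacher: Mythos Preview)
The paper leaves this lemma to the reader, so there is no argument to compare against; your localization, the block structure coming from the time-identity, and your verification of (i), (ii), (iv) and of the $F$-specific monotonicity/semi-coercivity via $B'(Y)^T\tilde n(y)=\mu(y)\,n(Z)$ are all the natural steps and are correct.

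The gap is exactly where you suspect, and it is not just missing bookkeeping. In the large-$|p|$ regime you claim that $A_1\le\omega_{\Lambda|p|\sqrt d}\bigl(|p|\,\omega_B(|Y'-Y|)\bigr)$ ``is absorbed, for a suitable new modulus $\tilde\omega$, into $\tilde\omega\bigl(|Y'-Y|(1+\max\{0,\tilde H(Y,p)\})\bigr)$''. This absorption fails in general for a merely $C^1$ diffeomorphism. Take $H(X,p)=|p|$ (so $\omega_L(t)=t$ for every $L$): then $\tilde H(Y,p)=|B'(Y)^Tp|$ and one has, sharply up to constants, $\tilde H(Y',p)-\tilde H(Y,p)\asymp\omega_B(r)\,\tilde H(Y,p)$ with $r=|Y'-Y|$. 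Writing $M=\tilde H(Y,p)$, a modulus $\tilde\omega$ with $\omega_B(r)M\le\tilde\omega\bigl(r(1+M)\bigr)$ for all $r,M\ge0$ would be needed; but along $r=\rho/(1+M)$, $M\to\infty$, the left side equals $M\,\omega_B\bigl(\rho/(1+M)\bigr)$, which diverges whenever $\omega_B(t)/t\to\infty$ as $t\to0^+$ (e.g.\ $\omega_B(t)=t^\alpha$, $\alpha<1$). Coercivity does not help here: it bounds $M$ from below by $\psi(|p|/\Lambda)$, not $|p|$ from above in terms of $M$ in any way that controls the family $\{\omega_L\}_L$. Your argument does go through if $B'$ is Lipschitz, i.e.\ $\Phi\in C^{1,1}$, since then $\omega_B(r)\le Cr$ and $\omega_B(r)M\le Cr(1+M)$ gives a linear $\tilde\omega$. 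So either an implicit $C^{1,1}$ regularity on $\partial\Omega$ is needed, or condition~(iii) has to be read in a form that \emph{is} stable under $C^1$ changes of variables --- for instance $H(Y,p)-H(X,p)\le\omega(|Y-X|)\,(1+\max\{0,H(X,p)\})$, which your computation does establish for $\tilde H$.
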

We now turn to the proof of Theorem \ref{th::40o}.
\begin{proof}[Proof of Theorem \ref{th::40o}]
Up to proceed as in Step 0 of the proof of Theorem \ref{th::b7}, we can assume that $u_0$ belongs to $C^1(\overline \Omega)$.\newline
We set
$$\tilde u := \max\left\{u,\underline u_0\right\},\quad \underline u_0(t,X)=u_0(X)-C_T=\underline u_0(X)$$
and
$$M:=\sup_{(t,x)\in [0,T)\times \overline \Omega} \Psi(t,x)\quad \mbox{with}\quad \Psi(t,X)=\tilde u(t,x)-v(t,x)-\frac{\eta}{T-t}.$$
Assume by contradiction that 
$$0<M=\Psi(X_0)\quad \mbox{with}\quad X_0:=(t_0,x_0)\in [0,T)\times \overline \Omega.$$
By assumption, we have $t_0>0$. If $x_0\in \Omega$, then we can localize, and then get a contradiction by standard method of doubling of variables.
Hence assume that 
$x_0\in \partial \Omega$.
Up to modify slithly the functions, we can assume that the suppremum is strict at $X_0$.
Up to change the coordinates, we can also assume that 
$$x_0=0,\quad \Omega\cap B_r(x_0)=\left\{x=(x',x_d)\in \R^{d-1}\times \R,\quad x_d>h(x')\right\}\cap B_r(x_0),\quad h(0)=0=Dh(0).$$
Setting
$$x=\Phi(y)\quad \mbox{with}\quad y=(y',y_d)=\Phi^{-1}(x):=(x',x_d-h(x'))\quad \mbox{and}\quad \tilde U(t,y):=\tilde u(t,\Phi(y)),\quad V(t,y):=v(t,\Phi(y))$$
we see that $\Phi$ is locally invertible and its inverse is a $C^1$ map, given, for some $\rho>0$ small enough, by
$$\Phi: K_\rho^+\to \overline \Omega \cap B_r(x_0)\quad \mbox{with}\quad K_\rho^+:=[-\rho,\rho]^{d-1}\times [0,\rho].$$
Hence we have
$$x=(x',x_d)=(y',y_d+h(y'))=\Phi(y),\quad \tilde u(t,x)=\tilde U(t,\Phi^{-1}(x)),\quad v(t,x)=V(t,\Phi^{-1}(x)),$$
and
$$\left\{\begin{array}{ll}
D_{x_d} \tilde u(t,x)&=D_{y_d} \tilde U(t,\Phi^{-1}(x)),\\
D_{x'}\tilde u(t,x)&=D_{y'}\tilde U(t,\Phi^{-1}(x))-\left\{D_{y_d}\tilde U(t,\Phi^{-1}(x))\right\}\cdot D_{x'}h(x'),\\
\tilde u_t (t,x)&=\tilde U_t(t,\Phi^{-1}(x)).
\end{array}\right.$$
This gives the new Hamiltonian $\tilde H$ and boundary function $\tilde F$ for $Y=(t,y)$, $X=(t,x)$ and $x=\Phi(y)$
$$\begin{array}{l}
\tilde H(Y,D\tilde U(Y))=H(X,D\tilde u(X)),\\
\tilde F(Y,D\tilde U(Y))=F(X,D\tilde u(X)),\quad \mbox{for}\quad y=(y',0)
\end{array}$$
which are defined by (for $y:=(y',y_d)$)
$$\begin{array}{l}
\tilde H(t,y,p',p_d):=H(t,\Phi(y),p'-p_d D_{y'}h(y'),p_d),\\
\tilde F(t,y,p',p_d):=F(t,\Phi(y),p'-p_dD_{y'}h(y'),p_d),\quad \mbox{for}\quad y=(y',0).
\end{array}$$
Hence $\tilde U$ and $V$ are respectively sub/supersolutions of
$$\left\{\begin{array}{l}
W_t+\tilde H(Y,DW)=0\quad \mbox{on}\quad (0,T)\times [-\rho,\rho]^{d-1}\times [0,\rho]\\
W_t+\tilde F(Y,DW)=0\quad \mbox{on}\quad (0,T)\times [-\rho,\rho]^{d-1} \times \left\{0\right\}\\
\end{array}\right.$$
We now apply Lemma \ref{lem::b45} to insure that $\tilde H$ and $\tilde F$ satisfy (locally) the same structural conditions than $H$ and $F$.
Moreover, we have 
$$M=\sup_{(t,y)\in [0,T)\times K_\rho^+} \tilde \Psi(t,y)=\tilde \Psi(t_0,0)\quad \mbox{with}\quad \tilde \Psi(t,y):=\tilde U(t,y)-V(t,y)-\frac{\eta}{T-t}.$$
Up to add some small and smooth tangential correction term $|t-t_0|^2+|y'|^2$ to $V$ (here we neglect this correction which can be treated in a very classical way), we can assume that
$$\tilde \Psi(t,y)< M \quad \mbox{for all}\quad (t,y)\in \left([0,T)\times K_\rho^+\right) \backslash \left\{(t_0,0)\right\}.$$
This implies that for $\xi=(t,x')$, $\zeta=(s,y')$
$$M(\theta):=\sup \left\{\tilde U(\xi,y_d)- V(\zeta,y_d) -\frac{\eta}{T-s},\quad \xi, \zeta\in [0,T)\times [-\rho,\rho]^{d-1},\ y_d\in [0,\rho],\quad |\xi-\zeta|\le \theta\right\}$$
with
$$\lim_{\theta \to 0^+} M(\theta)=M>0.$$
We are then back to the begining of the proof of  Theorem~\ref{th::b7}, which leads to a contradiction. Again, we conclude that $M\le 0$ for all $\eta \to 0^+$, and then deduce that $\tilde U\le V$, and then $u\le v$. This ends the proof of the theorem.
\end{proof}


\end{document}